\newcommand\inote[1]{\textcolor[rgb]{0,.8,0}{({ Irit: }{#1}~)}}
\newcommand\nnote[1]{\textcolor[rgb]{0.8,.3,0}{({Irit v2: }{#1}~)}}
\newcommand{\eps}{\varepsilon}
\newcommand{\norm}[1]{\left\|{#1}\right\|}
\newcommand{\abs}[1]{|{#1}|}
\newtheorem{theorem}{Theorem}
\newtheorem*{claim*}{Claim}
\newtheorem{claim}[theorem]{Claim}
\newtheorem{lemma}[theorem]{Lemma}
\newtheorem{corollary}[theorem]{Corollary}
\newtheorem{question}[theorem]{Question}
\newenvironment{remark}{\noindent{\bf Remark.}\hspace*{1em}}{\bigskip}
\newenvironment{proof-sketch}{\noindent{\bf Sketch of Proof}\hspace*{1em}}{\qed\bigskip}
\newenvironment{proof-idea}{\noindent{\bf Proof Idea}\hspace*{1em}}{\qed\bigskip}
\newenvironment{proof-attempt}{\noindent{\bf Proof Attempt}\hspace*{1em}}{\qed\bigskip}
\newcommand\pare[1]{\left(#1\right)}
\providecommand\given{}
\DeclarePairedDelimiterXPP\prob[1]{
  {\textbf{Pr}}
}{[}{]}{}{
\renewcommand\given{  \nonscript\:
  \delimsize\vert
  \nonscript\:
  \mathopen{}
  \allowbreak}
  #1
}
\definecolor{shadecolor}{RGB}{254,255,154}
\newcommand{\rnote}[1]{\par\noindent\colorbox{shadecolor}
{\parbox{\dimexpr\textwidth-2\fboxsep\relax}{(Ron: #1)}}}
    \renewcommand{\inote}[1]{}
    \renewcommand{\rnote}[1]{}
    \renewcommand{\nnote}[1]{}
\newcommand{\routeprod}{\circ}
\title{Bipartite unique-neighbour expanders via Ramanujan graphs}
\date{}
\author{Ron Asherov}
\author{Irit Dinur\thanks{Irit Dinur acknowledges support by ERC grant 772839 and ISF grant 2073/21.}}
\affil{Weizmann Institute, Rehovot, Israel}
\begin{document}
\maketitle
\begin{abstract}
We construct an infinite family of bounded-degree bipartite unique-neighbour expander graphs with arbitrarily unbalanced sides.
Although weaker than the lossless expanders constructed by Capalbo et al., our construction is simpler and may be closer to be implementable in practice due to the smaller constants.
We construct these graphs by composing bipartite Ramanujan graphs with a fixed-size gadget in a way that generalizes
the construction of unique neighbour expanders
by Alon and Capalbo.
For the analysis of our construction we prove a strong upper bound on average degrees in small induced subgraphs of bipartite Ramanujan graphs. Our bound generalizes Kahale's average degree bound to bipartite Ramanujan graphs, and may be of independent interest. 
Surprisingly, our bound strongly relies on the  exact Ramanujan-ness of the graph and is not known to hold for nearly-Ramanujan graphs.
\end{abstract}

\section{Introduction}
An infinite family $G_n = (L_n \sqcup R_n, E_n)$ of $(c,d)$-biregular graphs
with $\abs{L_n} + \abs{R_n} \to \infty$
is called a \emph{unique neighbour expander family}
if there exists $\delta > 0$
such that for every $n$
and every set of left side vertices $S \subseteq L_n$
of size $\abs{S} \leq \delta \abs{L_n}$
there exists a unique neighbour of $S$ in $G_n$, namely a vertex in $R_n$ that is connected to exactly one vertex in $S$.
We only require that sets of left vertices have unique neighbours, and arbitrarily small right side sets may have no unique neighbour.

Alon and Capalbo \cite{alon2002explicit} construct several explicit families of unique neighbour expanders, via an elegant composition of a Ramanujan graph and a gadget.
They construct three\inote{check} families of general (non-bipartite) graphs in which all small sets have unique neighbours, and one family 
of slightly unbalanced bipartite graphs where small sets on the left have unique neighbors on the right. In their construction the left side is $22/21$ times bigger than the right side.
The more imbalanced the graph, the harder it is for small left hand side sets to expand into the right hand side. Capalbo et. al. \cite{capalbo2002randomness} construct arbitrarily unbalanced bipartite graphs that are lossless expanders, a notion strictly stronger than unique neighbour expansion. Their construction is based on a sequence of somewhat involved composition steps using randomness conductors.  

Our main theorem is an efficient construction of an infinite family of bipartite unique neighbour expanders for any constant imbalance $\alpha$, and any sufficiently large left-regularity degrees of a specific form:
\begin{theorem}\label{une}
There is a function $\hat{q}: \mathbb{N} \times \mathbb{R} \to \mathbb{N}$
such that for every integer $c_0>5$ and real number $\alpha > 1$,
if $q > \hat{q}(c_0,\alpha)$ is a prime power and
$\alpha c_0 (q+1)$ is an integer,
then there is a polynomial-time construction of an infinite family of $(c_0(q+1), \alpha c_0(q+1))$-biregular unique neighbour expanders.
\end{theorem}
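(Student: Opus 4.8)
\medskip
\noindent\textbf{Proof proposal.}
The plan is to build $G'_n$ as a routed product $G'_n=G_n\routeprod\Gamma$, in the spirit of Alon and Capalbo, of an explicit infinite family $G_n$ of $(q+1)$-biregular bipartite Ramanujan graphs with a single \emph{fixed} bipartite gadget $\Gamma=\Gamma_{c_0,\alpha}$. For $G_n$ one can take the bipartite double covers of Morgenstern's degree-$(q+1)$ Ramanujan graphs, which are explicit whenever $q$ is a prime power. The gadget $\Gamma$ is a fixed bipartite graph of left-degree $c_0$, with its two sides in ratio $\alpha$, chosen so that (i) wiring a copy of $\Gamma$ along each vertex-neighbourhood of $G_n$ produces a $(c_0(q+1),\alpha c_0(q+1))$-biregular graph with $\abs{L_n}+\abs{R_n}\to\infty$ --- this is where the hypothesis that $\alpha c_0(q+1)$ is an integer is used --- so that $G'_n$ locally looks like $\Gamma$ and globally like $G_n$; and (ii) $\Gamma$ is itself a unique-neighbour expander for all sufficiently small subsets of its left side. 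Property (ii) holds for a \emph{random} gadget of left-degree $c_0>5$ with some positive expansion fraction depending only on $c_0$ and $\alpha$, so a valid $\Gamma$ exists by the probabilistic method; as it has size bounded in terms of $c_0,\alpha$ alone, it is found by brute force, and combined with the explicit base family this makes $G'_n$ polynomial-time constructible. What remains is the unique-neighbour property.

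Fix $S\seq L_n$ with $\abs S\le\delta\abs{L_n}$, for a $\delta=\delta(c_0,\alpha)>0$ pinned down at the end, and push $S$ down to the base graph: let $P\seq U_n$ collect the base-vertices touched by $S$, and record for each $u\in P$ its \emph{load}, the number of vertices of $S$ sitting \say{over} $u$. Since every fibre has size bounded in terms of $c_0,\alpha$ (for fixed $q$), $P$ is a small subset of $U_n$; split $P=P_{\mathrm{light}}\sqcup P_{\mathrm{heavy}}$ at a load-threshold $t$ to be fixed later, so that $P_{\mathrm{heavy}}$ is an even smaller subset of $U_n$. The rough dichotomy is that light base-vertices are resolved by $\Gamma$ and heavy ones by $G_n$.

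The engine on the $G_n$ side is our bipartite generalization of Kahale's average-degree bound, applied to the small set $P$: it yields that the induced bipartite subgraph on $P\cup N_{G_n}(P)$ has average degree so close to $1$ that all but an $o_q(1)$-fraction of the base-edges leaving $P$ reach a base-vertex \emph{private} to a single vertex of $P$. This is exactly the place where exact Ramanujan-ness is essential: the Expander Mixing Lemma only bounds the number of such edges to within an additive $O(q)\abs P$, which is vacuous here. Now route. When a private base-vertex $w$ belongs to a \emph{light} $u\in P$, the trace of $S$ on $u$'s fibre is a small subset of $\Gamma$'s left side, so property (ii) of $\Gamma$ hands us a vertex \say{over} $w$ with a unique $\Gamma$-neighbour in $S$, which --- $w$ being private --- is a genuine unique neighbour of $S$ in $G'_n$. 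The residual case, in which $S$ is supported almost entirely on heavy base-vertices with nearly-full fibres, is the heart of the matter: $\Gamma$ is useless by itself (no vertex of $\Gamma$ meets all of its left side exactly once), and the unique neighbour must instead be extracted from the interaction of a private base-edge with $\Gamma$, which is precisely where the hypothesis $c_0>5$ is spent and where the stronger average-degree bound is needed to guarantee enough \say{room} inside the nearly-full fibres.

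The main obstacle is this heavy regime together with the calibration of parameters it forces. One must choose $\delta$, the load-threshold $t$, and $\hat q(c_0,\alpha)$ \emph{simultaneously}: $\delta$ small enough that $P$ is within the range of validity of the average-degree bound; $t$ placed so that \say{light} fibres are genuinely within $\Gamma$'s unique-neighbour regime while \say{heavy} fibres are sparse enough (as sets in $U_n$) for the average-degree bound to bite; and $\hat q(c_0,\alpha)$ large enough that every $o_q(1)$ error term, once multiplied by the fixed gadget constants, stays strictly below the gains from (i)--(ii). Granting these choices, the light and heavy analyses together furnish a unique neighbour of every $S$ with $\abs S\le\delta\abs{L_n}$, which is the theorem. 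The delicate new ingredient throughout --- and the one I expect to be hardest --- is the bipartite average-degree bound itself and, within this proof, its use in the heavy regime.
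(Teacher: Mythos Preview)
Your high-level plan --- Ramanujan base, probabilistic gadget, Kahale-type average-degree bound --- matches the paper, but both the construction and the argument miss the mark in ways that leave a genuine gap.

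On the construction: bipartite double covers of Morgenstern's graphs are $(q{+}1,q{+}1)$-biregular, and with a balanced base the routed product cannot be made $(c_0(q{+}1),\alpha c_0(q{+}1))$-biregular with left side $\alpha$ times larger --- the edge count forces the gadget's right side to have fewer than one vertex. The paper instead uses the $(q{+}1,q^3{+}1)$-biregular Ramanujan family of Ballantine et~al.\ (\cref{ballentine}); the large imbalance $d/c\approx q^2$ is precisely what creates room for a gadget with $\abs{L_0}=q^3{+}1$ and $\abs{R_0}=(q^3{+}1)/(\alpha(q{+}1))$ while keeping the product's sides in ratio $\alpha$.

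On the argument: in the routed product $G'=G\routeprod G_0$ the left side of $G'$ \emph{equals} the left side of $G$, so a set $S\subseteq L$ has no ``loads'' or ``fibres'' over base vertices, and your light/heavy dichotomy is vacuous. Moreover, \cref{thm:main-tech} does not give average right-degree $1+o_q(1)$; it gives $1+(1{+}\eps)\sqrt{(d{-}1)/(c{-}1)}$, which here is $1+(1{+}\eps)q$. That bound is nowhere near ``almost all edges are private,'' but it suffices for a much simpler one-step argument: some $v\in N_G(S)$ has at most $q{+}1$ neighbours in $S$; those form $S'\subseteq L_0$ with $\abs{S'}\le q{+}1$; \cref{gadget} (this is the sole place $c_0>5$ is spent, via inequality~\eqref{ineq_q}) furnishes a gadget in which every such $S'$ has a unique neighbour $j$; and $(v,j)$ is then a unique neighbour of $S$ in $G'$ by \cref{unq_claim}. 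There is no heavy regime and no load-threshold to calibrate.
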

The theorem is proven in \cref{thmproof},
and provides a way to compute $\hat q (c_0, \alpha)$.
Here are some computed values of $\hat q(c_0,\alpha)$ for several values of $c_0, \alpha$.

\begin{table}[H]
\centering
 \begin{tabular}{c c c} 
 $c_0$ & $\alpha$ & $\hat q(c_0, \alpha)$ \\ [0.5ex] 
 \hline
 10 & 2 & 18907 \\
 35 & 2 & 1492 \\
 100 & 100 & 136051 \\
 100 & 1.01 & 1135 \\ [1ex]
 \end{tabular}
\end{table}
\inote{maybe give some annotation - why did you choose to put these number here?}
\rnote{No particular reason, just numbers. I'm adding this short paragraph below about it}
Notice that $\hat q(c_o, \alpha)$ increases with $\alpha$, reflecting the fact that constructions with larger $\alpha$ (namely, more imbalanced sides) are harder to come by, and require larger degrees.

The construction uses an infinite family of bipartite Ramanujan graphs,
namely graphs whose non-trivial spectrum is contained in the spectrum of the $(c,d)$-biregular tree
(see \nameref{prelim} for details).
We construct the unique neighbour expander family by taking a family of bipartite Ramanujan graphs and combining them with a fixed size graph (\say{gadget}),
with a good unique neighbour property (small sets have unique neighbours),
whose existence is shown via the probabilistic method (\cref{gadget}).
The combination is done as follows.
We first place a copy of the gadget for every right side vertex of the Ramanujan graph.
The vertex is replaced by the right side of the gadget,
and its neighbours are identified with the left side of the gadget.
The gadget is used to route the neighbours of each left side vertex in the Ramanujan graph to its neighbours in the product graph. 

Expansion in the product graph comes from unique neighbour expansion of the gadget together with low degree vertices in the Ramanujan graph. Sufficiently low degree vertices are guaranteed to exist thanks to the following (new) bound on the average degree of induced subgraphs of bipartite Ramanujan graphs, which may be of independent interest.
\begin{restatable}{theorem}{maintech}\label{thm:main-tech}
Let $G = (L \sqcup R, E)$ be a $(c,d)$-biregular Ramanujan graph, and let $\eps > 0$.
Then there exists $\delta>0$,
that depends only on $\eps, c, d$,
such that for every $S \subset L$ of size $\abs{S} \leq \delta \abs{L}$,
the set $N(S) \subseteq R$ of the neighbours of $S$ satisfies
\begin{equation*}
    \frac{c\abs{S}}{\abs{N(S)}} \leq 1 + (1+\eps)\sqrt{\frac{d-1}{c-1}}
    .
\end{equation*}
\end{restatable}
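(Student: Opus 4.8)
The plan is to argue by contradiction. Suppose there is $S \subset L$ with $\abs{S} \le \delta\abs{L}$ (for a $\delta$ to be chosen small in terms of $\eps,c,d$) such that, writing $T = N(S)$ and $\bar d := c\abs{S}/\abs{T}$, we have $\bar d > 1 + (1+\eps)\sqrt{(d-1)/(c-1)}$. Since every vertex of $S$ sends all $c$ of its edges into $T$, the number $\bar d$ is exactly the average, over $u \in T$, of the number of neighbours of $u$ lying in $S$. I want to build from $S$ a vector $\phi$ on $L \sqcup R$, concentrated near $S \cup T$, whose Rayleigh quotient $\langle \phi, A\phi\rangle/\norm{\phi}^2$ for the adjacency operator $A$ strictly exceeds $\sqrt{c-1}+\sqrt{d-1}$ — the top of the spectrum of the $(c,d)$-biregular tree — while the projection of $\phi$ onto the Perron direction $(\one_L,\one_R)$ carries a negligible share of its mass. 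This exhibits a non-Perron eigenvalue above $\sqrt{c-1}+\sqrt{d-1}$, contradicting the Ramanujan hypothesis.

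The shape of $\phi$ comes from a one-dimensional model. Arrange $L\sqcup R$ in shells around $S$: set $V_{-1}=S$, $V_0=T$, $V_1 = N(T)\setminus S$, $V_2 = N(V_1)\setminus T$, and so on, and consider $\phi$ that is (roughly) constant, equal to $\psi_k$, on $V_k$. For such a $\phi$ the Rayleigh quotient is controlled by a self-adjoint tridiagonal operator $\hat A_S$ on $\ell^2(\{-1,0,1,2,\dots\})$ whose bulk off-diagonal entries alternate $\sqrt{c-1},\sqrt{d-1},\sqrt{c-1},\dots$ — precisely the radial part of the biregular tree, with spectral radius $\sqrt{c-1}+\sqrt{d-1}$ — but whose two innermost bonds are modified, the $S$–$T$ bond carrying weight $\sqrt{c\,\bar d}$ and the $T$–$V_1$ bond carrying weight $\sqrt{d-\bar d}$. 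Solving the resulting boundary-value problem, one finds that $\hat A_S$ acquires an $\ell^2$ eigenfunction with eigenvalue strictly above $\sqrt{c-1}+\sqrt{d-1}$ exactly when $\bar d$ exceeds the threshold $\bar d^\ast := 1 + \sqrt{(d-1)/(c-1)}$ (and the eigenvalue stays above throughout $\bar d\in(\bar d^\ast,d]$, tending to the Perron value $\sqrt{cd}$ as $\bar d\uparrow d$). The test vector $\phi$ is taken to be this decaying eigenfunction, pulled back to the shells and truncated at a large radius $r=r(\eps)$.

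Three error terms then have to be controlled. First, truncation: beyond shell $r$ the profile is cut to zero, so $\phi$ is only an approximate eigenvector of $A$ and the outermost shell contributes an error; using the genuinely decaying eigenfunction of $\hat A_S$ I expect this to be an $O(1/r)$ fraction of $\norm{\phi}^2$, which forces $r\to\infty$ as $\eps\to 0$. Second, Perron contamination: $\langle\phi,(\one_L,\one_R)\rangle^2/(\norm{\phi}^2(\abs L+\abs R))$ is of order $\abs T/\abs R$, so its contribution to the Rayleigh quotient is of order $\sqrt{cd}\cdot\abs T/\abs R$, negligible once $\delta$ — hence $\abs T/\abs R$ — is small enough, and this is what pins down $\delta(\eps,c,d)$. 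Third, and this is the real obstacle: the shells of a genuine Ramanujan graph are not perfectly regular — vertices of $T$ have varying degree into $S$, shell sizes can differ from the tree's because of short cycles, and so $\hat A_S$ is only a perturbation of the clean Jacobi matrix above. One must show these irregularities do not push the top of $\operatorname{spec}(\hat A_S)$ back down to $\sqrt{c-1}+\sqrt{d-1}$ — I would attempt this either by restricting to a subset $S'\subseteq S$ whose radius-$2r$ neighbourhood is a forest while keeping $c\abs{S'}/\abs{N(S')}$ above threshold, or, since a general Ramanujan graph may be cycle-dense, by replacing the shell-constant ansatz with a weighting that records the actual local structure (a Green's-function-type sum over non-backtracking walks back to $S$) and estimating its quadratic form directly. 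It is precisely here that exact Ramanujan-ness enters in a non-robust way: the eigenvalue produced by an overloaded shell lies only slightly above $\sqrt{c-1}+\sqrt{d-1}$, so there is no slack in the spectral hypothesis available to absorb these perturbations.
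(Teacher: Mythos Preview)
Your route is genuinely different from the paper's. The paper does \emph{not} argue variationally. It counts non-backtracking walks: for the bipartite subgraph induced on $S\sqcup N(S)$ it invokes a combinatorial lower bound $M_{2\ell}(S)\ge c|S|\bigl((c-1)(\bar d_R-1)\bigr)^{\ell-\frac12}$, and for the ambient Ramanujan graph it proves the matching upper bound $M_{2\ell}(S,G)\le |S|\cdot O(\ell)\cdot\bigl((c-1)(d-1)\bigr)^{\ell/2}$ by writing the non-backtracking operator $A_{2\ell}^{LL}$ as a polynomial $p_\ell(MM^\top)$, solving the three-term recursion for $p_\ell$, and showing $|p_\ell(\lambda^2)|\le O(\ell)\,((c-1)(d-1))^{\ell/2}$ for every $\lambda$ in the bipartite Ramanujan window. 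Chaining the two bounds and sending $\ell\to\infty$ forces $\bar d_R-1\le (1+\eps)\sqrt{(d-1)/(c-1)}$. A feature worth flagging: the paper's recursion analysis uses \emph{both} Ramanujan inequalities --- the cancellation $|\lambda_{1,2}|^2=(c-1)(d-1)$ in the characteristic roots occurs only when the discriminant is negative, which needs $|\lambda|\ge\sqrt{d-1}-\sqrt{c-1}$ as well as the upper bound. Your variational scheme, by contrast, would only ever contradict the \emph{upper} bound; if it worked, it would prove a strictly stronger theorem (covering the one-sided MSS graphs the paper explicitly excludes). That asymmetry is a hint that something is missing.

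Your idealized Jacobi model is correct and your threshold computation $\bar d^\ast=1+\sqrt{(d-1)/(c-1)}$ is right --- one can check directly that at $\lambda=\sqrt{c-1}+\sqrt{d-1}$ the matching condition $\psi_2/\psi_1=1$ forces exactly $\bar d=\lambda/\sqrt{c-1}=\bar d^\ast$. The gap is the step you yourself call ``the real obstacle'': transferring the Jacobi calculation to the actual graph. Neither of your proposed fixes is carried out, and neither is straightforward. Restricting to a tree-like $S'\subseteq S$ is a non-starter in general (girth can be bounded while $|L|\to\infty$), and there is no reason the restriction preserves $\bar d$ above threshold. The ``Green's-function weighting over non-backtracking walks back to $S$'' is in fact morally the paper's method: once you weight by non-backtracking walk generating functions and compute the quadratic form, you are doing the $p_\ell$-recursion analysis rather than a Rayleigh quotient, and that analysis is exactly where the lower eigenvalue bound reappears. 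Your final paragraph also conflates two issues --- the lack of slack in the Ramanujan hypothesis is about how close the produced eigenvalue is to $\sqrt{c-1}+\sqrt{d-1}$, whereas the irregularity problem is about whether your test vector's Rayleigh quotient on the \emph{real} graph is controlled by the Jacobi model at all; these are separate difficulties and the second one is the actual hole in the argument.
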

The theorem shows that every small set on the left side admits neighbours on the right side with low degree
in the induced subgraph.
The proof involves recursive analysis of non-backtracking paths. Interestingly, the recursion has a nice solution only when the graph is Ramanujan. It is unclear whether this method can be extended to \say{nearly-Ramanujan} graphs.

Combining the average degree upper bound with the gadget,
the low-degree right-side vertices in the Ramanujan graph imply a small set of left-side vertices in the gadget;
this set will have a unique neighbour in the gadget,
which gives (via \cref{unq_claim}) a unique neighbour in the constructed graph.

Even though Ramanujan graphs are the best spectral expanders one can hope for,
an efficient construction of Ramanujan graphs (be them bipartite or not)
does not immediately imply
that we can construct unique neighbour expanders.
In the $d$-regular case,
Kahale shows (\cite[Thm 5.2]{kahale1995eigenvalues}) that there are nearly-Ramanujan graphs with expansion at most $d/2$,
which is not enough for unique neighbour expansion.
In fact, recently Kamber and Kaufman \cite{kamber2022combinatorics} proved that some Ramanujan graphs strongly fail to have unique neighbour expansion, by giving explicit constructions of arbitrarily small sets that do not admit a unique neighbour.

As mentioned, the graph product we define requires a fixed size gadget,
whose proof of existence is not constructive.
In principle, such a gadget could be found by exhaustive search since we are working in a constant size search space. The gadget's size in our construction is at least cubic in $q$,
so exhaustive search is impractical for even small values of $q$. Unfortunately we know of no efficient construction of a gadget
with the required parameters.
It is possible that the graph sampling method present in \cite{applebaum2019sampling} can be used to construct fixed size gadgets more efficiently.

The rest of this work is organized as follows.
In \cref{related_work} we survey some of the uses of unique neighbour expanders,
and mention known constructions of such graphs.
\cref{prelim} provides basic definitions and results.
Our main technical tool,
that asserts the low induced degree in bipartite Ramanujan graph,
is stated and proven in \cref{expansion}.
We prove the existence of a fixed-size gadget with good unique neighbour expansion
properties in \cref{gadget_section}.
In \cref{construction} we define the way we use the Ramanujan graphs
and the gadget to construct bipartite unique neighbour expanders,
and by that prove \cref{une}.

\section{Related work}\label{related_work}
One of the prominent uses of bipartite expanders in general and bipartite unique neighbour expanders in particular,
and the motivation for this work,
is the construction of error correcting codes.
The works of Tanner \cite{tanner1981recursive} and later Sipser and Spielman \cite{sipser1996expander}
construct linear error correcting codes $\mathcal{C}(B,C_0)$ from a bipartite
graph $B$ and a smaller linear code $C_0$. 
It is shown that under some assumptions on the code $C_0$
and the expansion properties of the bipartite graph $B$,
the resulting code has good distance.
This gives a way to take a family of graphs and transform it into a family of codes.
Our work describes a construction that, in a sense, goes the other way around:
given two bipartite graphs, $B$ and $B_0$, we view $B_0$ as a parity check graph\footnote{This is a bipartite graph whose incidence structure is given by the parity check matrix.} of the base code $C_0$, and $B$ plays the role of the underlying graph of a Tanner code  $\mathcal{C}(B,C_0)$. Our output graph is just the parity check graph of  $\mathcal{C}(B,C_0)$.
We give full details of this graph product in \cref{rpd}.

In \cite{dinur2006robust,ben2009tensor} it is shown that codes constructed  on top of unique neighbour expanders
are weakly smooth and can be used to construct robustly testable codes.
But the uses of unique neighbour expanders are not limited to error correcting codes:
for example, such graphs may be used in the context of non-blocking networks,
where it is required to connect several input-output terminals via paths in a non-intersecting fashion.
Arora et al. \cite{arora1996line} use graphs with expansion beyond the $d/2$ barrier
to establish the existence of unique neighbours in the graph,
which are useful in finding input-output paths in the online settings.
Roughly speaking, when routing a set of input-output pairs,
the algorithm can use all unique neighbours freely since they
are guaranteed not to interfere with any other paths.
Pippenger \cite{pippenger1993self} uses explicit constructions of spectral expanders
in order to solve a similar problem,
in the case where the route planning is computed locally.
There the spectral expansion of a graph is proven to imply a combinatorial expansion,
in a similar way to our \cref{thm:main-tech}.

Another use for unique neigbhour expanders is for load-balancing problems,
such as the token distribution problem described in \cite{peleg1989token},
and the similar pebble distribution problem,
briefly discussed in \cite{alon2002explicit}.
In the latter, pebbles are placed arbitrarily on vertices of a graph,
and need to be distributed via edges of the graph such that no vertex has more than one pebble.
Given that the total number of pebbles is small and that the graph has the unique neighbour property,
we have an efficient parallel algorithm for redistributing the pebbles.

Alon and Capalbo \cite{alon2002explicit} construct several families of unique neighbour expanders,
one of them is a family of bipartite graphs whose left side is $22/21$ times bigger than the right side.
Similar to the construction presented at this work,
each graph in the constructed family is a combination of a Ramanujan graph and a fixed graph.
These graphs are not (bi-)regular but their degrees are bounded by a constant.
Becker \cite{becker} uses a different family of $8$-regular Ramanujan graphs
in order to construct a family of (non-bipartite) unique neighbour expanders,
with the additional property that each graph in the family
is a Cayley graph.

A different approach to constructing bipartite graphs uses randomness conductors.
Randomness conductors are functions that receive a bitstring with some entropy (according to some measure of entropy),
and a uniformly random bitstring,
and output a bitstring, with certain guarantees on its entropy.
Some conductors can be constructed explicitly via a spectral method,
and Capalbo et al. \cite{capalbo2002randomness} combine them in a zig-zag-like fashion
in order to construct an infinite family of \emph{bipartite lossless expanders},
namely bipartite graphs with fixed left-regularity $c$ where small enough sets 
contained in the left side
have at least $c(1-\eps)$ neighbours on the right side.
These graphs are trivially unique neighbour expanders,
since a simple counting argument shows that if a set expands by a factor of more than $c/2$,
then it has unique neighbours.

\section{Preliminaries}\label{prelim}
\subsection{Expander graphs}
In this work we deal with undirected graphs,
that may contain multiple edges between two vertices,
but do not contain self-loops.
For a graph $G$ and a subset of its vertices $S$ we denote by $N_G(S)$ the \emph{neighbourhood} of $S$,
namely all vertices adjacent to some vertex in $S$.
When the graph in discussion is obvious, we may omit it and write $N(S)$.
We say that $v$ is a \emph{unique neighbour} of $S$ if there is a unique $u \in S$ that is adjacent to $v$.

Let $(G_n)$ be a series of graphs with the number of vertices growing to infinity.
There are several well studied notions of \emph{expansion} in graph families;
we note some of them.
\nnote{in prelims of a paper (unlike a thesis) we put only what we actually use. so we should probably only discuss bipartite graph, and only spectral, lossless and unique nbr expansion}
\begin{enumerate}
    \item \emph{Vertex expansion.}
    $(G_n)$ is a $(\delta,\alpha)$-vertex expander if for every $n$ and any subset $S \subseteq V_{G_n}$,
    if $\abs{S} \leq \delta \abs{V_{G_N}}$ we have that $\abs{N_{G_N}(S)} \geq \alpha \abs{S}$.
    \item \emph{Edge expansion.}
    $(G_n)$ is a $(\delta,\alpha)$-edge expander if for every $n$ and any subset $S \subseteq V_{G_n}$,
    if $\abs{S} \leq \delta \abs{V_{G_N}}$
    we have that at least an $\alpha$-fraction of the edges with one endpoint in $S$
    have their other endpoint outside of $S$.
    \item \emph{Spectral expansion.}
    Assume that $(G_n)$ are all $d$-regular,
    and let $A_n$ be the adjacency operator associated with $G_n$,
    so $A_n$ is indexed by vertices of $G_n$ and ${(A_n)}_{uv}$ counts how many edges there are
    between $u$ and $v$ in $G_n$.
    Let $\lambda_1 \geq \ldots \geq \lambda_{V_n}$ be its spectrum.
    It can be seen that $\lambda_1 = d$.
    Then $(G_n)$ is a $\lambda$-spectral expander if for all $n$ and $i \neq 1$ we have $\abs{\lambda_i} \leq \lambda$.
    \item \emph{Unique neighbour expansion.}
    $(G_n)$ is a $\delta$-unique neighbour expander if for every $n$, any subset $S \subseteq V_{G_n}$
    of size at most $\delta \abs{V_{G_N}}$ has a unique neighbour.
\end{enumerate}
These definitions apply to bipartite graphs $G_n = (L_n \sqcup R_n, E_n)$ as well,
with the exception that we usually consider sets contained in the left side only,
and require that $L_n / R_n$ is a constant, normally greater than $1$.
In this case we note that edge expansion is meaningless (since all edges leaving the left side enter the right side),
and if a bipartite graph is $(c,d)$-biregular,
namely if all left-side vertices have degree $c$
and all right-side vertices have degree $d$,
then the largest eigenvalue of the associated adjacency operator is $\sqrt{cd}$.

It can be seen that for $d$-regular graphs, the best spectral expansion we can hope for is $\alpha=2\sqrt{d-1}$.
These graphs are known as \emph{Ramanujan graphs}.
\nnote{Add a subsection discussing known constructions of bipartite Ramanujan graphs, and a quote of the theorem of Balantine that we are using.
Also, explain the subtleties of what it means to be bip ramanujan, and that the MSS work doesn't suffice}
\rnote{done}
\subsection{Bipartite Ramanujan graphs}
Ramanujan graphs have the best spectral gap \cite{nilli1991second},
and their non-trivial eigenvalues are contained in the spectrum of the infinite $d$-regular tree $T_d$.
Similarly, in the bipartite case,
Biregular Ramanujan graphs are defined via their relation to the infinite biregular trees:
the infinite $(c,d)$-biregular tree $T_{c,d}$, for $d > c$, has the spectrum
\begin{equation*}
    \lambda \in \operatorname{spec}(T_{c,d}) \Leftrightarrow
    \abs{\lambda} \in \{0\} \cup \left[\sqrt{d-1} - \sqrt{c-1}, \sqrt{d-1}+\sqrt{c-1}\right]
\end{equation*}
(see, e.g., \cite{godsil1988walk}, \cite{li1996}.)
We therefore say that a finite $(c,d)$-biregular graph is \emph{bipartite Ramanujan} if its nontrivial eigenvalues lie in this set.
That means that every eigenvalue $\lambda$ of a bipartite Ramanujan graph belongs to one of these classes:
\begin{enumerate}
    \item Trivial: $\lambda = \pm \sqrt{cd}$, with eigenvectors fixed on either sides, or $\lambda=0$;
    \item $\lambda \in [\sqrt{d-1}-\sqrt{c-1}, \sqrt{d-1}+\sqrt{c-1}]$ are the nontrivial positive eigenvalues;
    \item $\lambda \in [-\sqrt{c-1}-\sqrt{d-1}, \sqrt{c-1}-\sqrt{d-1}]$ are the nontrivial negative eigenvalues.
    Note that since the graph is bipartite, $\lambda$ is an eigenvalue if and only if $-\lambda$ is an eigenvalue.
\end{enumerate}
By an extension of the Alon-Boppana bound, given in \cite{feng1996spectra}, this is the best spectral gap we can hope for, at least as far as upper bounds for $\abs{\lambda}$ are concerned.
We note that unlike the $d$-regular case,
we require a lower bound to $\abs{\lambda}$ too,
which is essential for our proof.

While there is a vast literature on the construction of $d$-regular Ramanujan graph (most prominently \cite{lubotzky1988ramanujan} and \cite{margulis1988explicit}), less is known about bipartite Ramanujan graphs.
In 2014 Marcus et al. \cite{marcus2013interlacing} proved the existence of biregular graphs with one-sided spectral graphs that resemble the Ramanujan bounds:
these graphs satisfy the one-sided inequality only,
namely $\abs{\lambda} \leq \sqrt{d-1} + \sqrt{c-1}$ for every nontrivial eigenvalue $\lambda$.
Gribinski et al. \cite{gribinski2021existence} showed a polynomial-time construction of such graphs, for every degrees $(d, kd)$ for any integers $d, k$.
These graphs do not suffice for our analysis, since we make explicit use of the lower bound $\abs{\lambda} \geq \sqrt{d-1}-\sqrt{c-1}$ too.

In 2021 Brito et al. \cite{brito2022spectral} proved that a random biregular graph is nearly Ramanujan with high probability.
Interestingly, and unlike other works in this field,
our proof strongly relies on the graph to be exactly Ramanujan,
so we cannot use those constructions either.

We use an explicit construction of bipartite Ramanujan graphs (with both bounds on non-trivial eigenvalues) given by Ballantine et al.:
\begin{theorem}[\cite{ballantine2015explicit}]\label{ballentine}
For every prime power $q$,
there exists an explicit construction
of a $(q+1, q^3+1)$-biregular Ramanujan graph.
\end{theorem}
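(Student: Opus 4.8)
The plan is to realize the graph as a finite quotient of the Bruhat--Tits tree of a $p$-adic unitary group in three variables, and to read off the Ramanujan bound from the Ramanujan--Petersson conjecture for the attached automorphic forms. Fix a quadratic extension $E/F$ of number fields and a place $v$ of $F$ that is inert in $E$ with residue field $\mathbb{F}_q$ (taking $F$ large enough to realize the prescribed residue field when $q$ is not prime), and let $G$ be the special unitary group $SU_3$ of a non-degenerate Hermitian form in three variables relative to $E/F$. Since $G$ has $F_v$-rank $1$, its Bruhat--Tits building at $v$ is a tree $T$. A direct computation with the Hermitian form over the residue field shows $T$ is $(q+1,\,q^3+1)$-biregular: the link of one vertex type is a projective line over $\mathbb{F}_q$ ($q+1$ points), while the link of the other is the set of isotropic points of a Hermitian form in $\mathbb{P}^2(\mathbb{F}_{q^2})$, of which there are exactly $q^3+1$. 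One should describe $T$ concretely via Hermitian lattices so that the quotient below is computable.

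Next I would make the construction explicit. Choosing the Hermitian form so that $G$ is \emph{definite} at every archimedean place of $F$, any congruence subgroup $\Gamma \le G(F)$ that is $S$-arithmetic for a suitable finite set $S$ of places containing $v$ acts on $T$ with finite quotient, and refining $\Gamma$ to a finite-index congruence subgroup acting without inversions and preserving vertex types makes $X_\Gamma := \Gamma \backslash T$ a finite $(q+1,\,q^3+1)$-biregular bipartite graph. One then writes down explicit generators of $\Gamma$ (integral unitary matrices) and an explicit adjacency rule, exactly as in the Lubotzky--Phillips--Sarnak treatment of the $d$-regular case; breadth-first search on $T$ modulo $\Gamma$-equivalence (decided via a norm form and strong approximation) yields a polynomial-time construction, and varying the level produces an infinite family.

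The heart of the matter is the spectral bound. By the standard dictionary between the adjacency operator on $X_\Gamma$ and Hecke operators at $v$, every eigenvalue $\lambda$ of the adjacency operator is a Hecke eigenvalue of an automorphic representation $\pi$ of $G(\mathbb{A}_F)$ unramified at $v$; the trivial eigenvalues $\pm\sqrt{(q+1)(q^3+1)}$ and $0$ come from one-dimensional $\pi$, and for every other $\pi$ Rogawski's stable base change lifts $\pi$ to a conjugate-self-dual, cohomological cuspidal automorphic representation $\Pi$ of $\mathrm{GL}_3(\mathbb{A}_E)$. Such $\Pi$ is tempered at every place --- this is the Ramanujan conjecture for conjugate-self-dual cohomological cusp forms on $\mathrm{GL}_3$, known by attaching $\ell$-adic Galois representations and invoking Deligne's purity --- so its Satake parameter at $v$ is unitary. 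Unwinding the Satake isomorphism for the rank-one group $G$ at $v$, temperedness of $\Pi_v$ forces $|\lambda| \in [\,q^{3/2}-q^{1/2},\ q^{3/2}+q^{1/2}\,]$; note this delivers both the upper bound and the lower bound $|\lambda| \ge q^{3/2}-q^{1/2}$, the latter being exactly what excludes the complementary-series range. Together with biregularity this says $X_\Gamma$ is bipartite Ramanujan.

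The main obstacle is precisely this automorphic input: it bundles Rogawski's base change and multiplicity-one results for $U_3$, the construction of Galois representations attached to conjugate-self-dual cohomological cusp forms on $\mathrm{GL}_3$ over a CM field, and Deligne's purity to upgrade these to the Ramanujan bound --- none of it elementary, which is why we cite \cite{ballantine2015explicit} rather than reprove it. A secondary but necessary piece of bookkeeping is the choice of Hermitian form, ramification, and congruence level so that $G$ is definite, the non-trivial unramified local representations that actually occur are all tempered (no one-dimensional or CAP forms intruding into the wrong spectral range), and the level structure stays explicit enough to keep the construction polynomial-time.
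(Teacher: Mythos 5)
The paper does not prove this statement; it imports it verbatim from \cite{ballantine2015explicit}, and your sketch is an accurate outline of the strategy actually used there: quotients of the $(q+1,q^3+1)$-biregular Bruhat--Tits tree of $SU_3$ for an inert place of a quadratic extension, with the two-sided bound $\abs{\lambda}\in[q^{3/2}-q^{1/2},\,q^{3/2}+q^{1/2}]$ coming from Rogawski's classification/base change and temperedness of the relevant cusp forms. So your proposal matches the cited proof in approach; the only thing to flag is that everything of substance (ruling out CAP and one-dimensional contributions in the nontrivial spectrum, and the purity input) is delegated to deep automorphic machinery, which is exactly why the present paper cites rather than reproves the result.
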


\section{Vertex expansion in biregular Ramanujan graphs}\label{expansion}
Our main technical tool is the following theorem showing that bipartite Ramanujan graphs exhibit excellent left-to-right expansion. We restate the theorem for convenience. 

\maintech*

We note that the quantity on the left hand side of the inequality can be interpreted as follows. Look at the bipartite graph induced by taking the vertices $S$ on the left and $N(S)$ on the right.
Since every left vertex has $c$ outgoing edges,
the total number of edges in the induced subgraph is $c \abs{S}$.
This means that the expression on the left hand side of the inequality is exactly the average degree of the right side of the induced subgraph.%
\inote{suggest to add:}\rnote{Thanks, I added a sentence at the end of this paragraph on the lower bounds thing too}
Interestingly, the bound in this theorem is strictly stronger than what we would get from just applying the expander mixing lemma which amounts to
\begin{equation*}
    \frac{c\abs{S}}{\abs{N(S)}} \leq (1+\eps)\cdot\left(1 + \frac {d-1}{c-1} + 2\sqrt{\frac{d-1}{c-1}}\right)
    .
\end{equation*} See \cref{claim:EML} for details.
The fact that we improve upon the expander mixing lemma is perhaps not surprising since our analysis is based on enumerating non-backtracking paths, and not just on magnitude of the second largest eigenvalue.
We also use lower bounds on the magnitude of all nontrivial eigenvalues,
whereas the expander mixing lemma uses just upper bounds.

\subsection{Comparison to known bounds}
\inote{I'd put comparison to EML first. It's the thing most readers will immediately try to compare to; and the favorable comparison makes a point that thm 2 in itself is a "secondary" result of this paper}
\rnote{I'm convinced, thanks!}

As noted above,
\cref{thm:main-tech} is an improvement of the bound that the expander mixing lemma gives in similar settings,
which only uses the one-sided inequality $\abs{\lambda} \leq \sqrt{d-1} + \sqrt{c-1}$.
For reference, we state and prove the expander mixing lemma for bipartite Ramanujan graphs.
\begin{claim}[Expander mixing lemma for bipartite Ramanujan graphs]\label{claim:EML}
Let $G = (L \sqcup R, E)$ be a $(c,d)$-biregular Ramanujan graph, and let $\eps > 0$.
Then there exists $\delta > 0$ such that for every $S \subseteq L$ of size $\abs{S} \leq \delta \abs{L}$,
the neighbourhood of $S$ satisfies
\begin{equation*}
    \frac{c \abs{S}}{\abs{N(S)}} \leq 
    (1+\eps) \pare{1 + \frac{d-1}{c-1} + 2\frac{\sqrt{d-1}}{\sqrt{c-1}}}
    .
\end{equation*}
\end{claim}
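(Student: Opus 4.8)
The plan is to prove the expander mixing lemma for bipartite biregular graphs in the standard way, applying it with the one-sided Ramanujan bound $\lambda \le \sqrt{c-1}+\sqrt{d-1}$ on all nontrivial eigenvalues. Write $n = |L|$, $m = |R|$, so $cn = dm$. For $S \subseteq L$ and $T \subseteq R$, the number of edges $e(S,T)$ between them (counted with multiplicity) equals $\langle \mathbbm{1}_S, A \mathbbm{1}_T\rangle$ where $A$ is the bipartite adjacency operator. The key estimate is
\begin{equation*}
    \left| e(S,T) - \frac{cn}{\,n m\,}|S||T| \right| \le \lambda_2 \sqrt{|S||T|},
\end{equation*}
where $\lambda_2 = \sqrt{c-1}+\sqrt{d-1}$ is the largest nontrivial singular value of $A$; the error term comes from expanding $\mathbbm{1}_S$ and $\mathbbm{1}_T$ in the singular-value decomposition of $A$ and bounding the contribution of all but the top singular vector (the trivial one, supported uniformly on each side, contributing exactly $\frac{c}{m}|S||T| = \frac{cn}{nm}|S||T|$).

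**Key steps, in order.** First I set up $A$ and note its singular values: the top one is $\sqrt{cd}$ with left/right singular vectors proportional to $\mathbbm{1}_L$ and $\mathbbm{1}_R$, and all others are $\le \sqrt{c-1}+\sqrt{d-1}$ by the Ramanujan hypothesis. Second, I decompose $\mathbbm{1}_S = \frac{|S|}{n}\mathbbm{1}_L + f$ and $\mathbbm{1}_T = \frac{|T|}{m}\mathbbm{1}_R + g$ with $f \perp \mathbbm{1}_L$, $g \perp \mathbbm{1}_R$, so $\|f\|^2 \le |S|$ and $\|g\|^2 \le |T|$, and compute $\langle \mathbbm{1}_S, A\mathbbm{1}_T\rangle = \frac{|S||T|}{nm}\langle \mathbbm{1}_L, A\mathbbm{1}_R\rangle + \langle f, Ag\rangle$; the first term is $\frac{|S||T|}{nm}\cdot cn = \frac{c|S||T|}{m}$ and the second is bounded by $(\sqrt{c-1}+\sqrt{d-1})\|f\|\|g\| \le (\sqrt{c-1}+\sqrt{d-1})\sqrt{|S||T|}$. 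Third, I apply this with $T = N(S)$: then $e(S, N(S)) = c|S|$ (every edge out of $S$ lands in $N(S)$), giving
\begin{equation*}
    c|S| \le \frac{c|S|\,|N(S)|}{m} + (\sqrt{c-1}+\sqrt{d-1})\sqrt{|S|\,|N(S)|}.
\end{equation*}
Fourth, I divide through, set $x = |N(S)|$, and manipulate: writing $t = |S|/x = c|S|/(c x)$, the inequality becomes $c|S|/x \le c|S|\cdot x/(mx)\cdot \dots$; more cleanly, dividing by $\sqrt{|S|\,x}$ yields $c\sqrt{|S|/x} \le \frac{c}{m}\sqrt{|S|\,x} + (\sqrt{c-1}+\sqrt{d-1})$. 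Since $|S| \le \delta n$ and $x \le m$, the term $\frac{c}{m}\sqrt{|S|\,x} \le \frac{c}{m}\sqrt{\delta n \cdot m} = c\sqrt{\delta n/m} = c\sqrt{\delta d/c} = \sqrt{\delta c d}$, which we make $\le \eps' \cdot (\text{something})$ by choosing $\delta$ small. Rearranging to isolate $c|S|/x = \frac{c|S|}{|N(S)|}$ and using $(a+b)^2 = a^2 + b^2 + 2ab$ with $a = \sqrt{c-1}$, $b = \sqrt{d-1}$ — after normalizing by $c-1$ rather than by $c$, which is where the $\frac{d-1}{c-1} + 2\sqrt{\frac{d-1}{c-1}} + 1$ shape comes from — gives the claimed bound, with the $(1+\eps)$ factor absorbing the lower-order $\delta$-dependent slack.

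**The main obstacle** is purely bookkeeping: getting the final algebraic rearrangement to land on exactly $1 + \frac{d-1}{c-1} + 2\sqrt{\frac{d-1}{c-1}}$ rather than, say, a bound normalized by $c$ instead of $c-1$. The cleanest route is to first derive $\frac{c|S|}{|N(S)|} \le \frac{(\sqrt{c-1}+\sqrt{d-1})^2}{(1 - \frac{|N(S)|}{m})}\cdot\frac{1}{c-1}\cdot(\text{correction})$ by treating $\frac{c|S|}{|N(S)|} - \frac{c|S|}{m}$ as the left side, bounding $\sqrt{|S|/|N(S)|} \le \sqrt{1}$ trivially is too lossy, so instead one keeps $y := \sqrt{\frac{c|S|}{|N(S)|}}$ as the unknown, obtaining a quadratic inequality $y^2 \le \frac{|N(S)|}{m} y^2 + (\sqrt{c-1}+\sqrt{d-1})\frac{y}{\sqrt{c-1}}\cdot\sqrt{\tfrac{c-1}{c}}\cdot\dots$ — honestly the slickest presentation is: from $c|S| \le \frac{c|S||N(S)|}{m} + \lambda_2\sqrt{|S||N(S)|}$, divide by $|N(S)|$ to get $\frac{c|S|}{|N(S)|}\bigl(1 - \tfrac{|N(S)|}{m}\bigr)^{-1}$... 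I will instead just divide by $|N(S)|$, write $Z = \frac{c|S|}{|N(S)|}$, note $Z(1 - \frac{|N(S)|}{m}) \le \lambda_2 \sqrt{Z/c}\cdot\sqrt{c}$ hmm. The honest answer: set $u = \sqrt{c\,|S|/|N(S)|}$; then $u^2 \le \frac{|N(S)|}{m}u^2 + \lambda_2 u$, so $u^2(1 - \frac{|N(S)|}{m}) \le \lambda_2 u$, hence $u \le \lambda_2/(1-\frac{|N(S)|}{m})$, so $\frac{c|S|}{|N(S)|} = u^2 \le \lambda_2^2/(1-\frac{|N(S)|}{m})^2$. With $|N(S)| \le dm/c \cdot (|S|/n)\cdot$ — no: $|N(S)| \le c|S| \le c\delta n$, and $c\delta n / m = \delta c d / c = \delta d$... so $1 - |N(S)|/m \ge 1 - \delta d$, and choosing $\delta$ small makes $(1-\delta d)^{-2} \le 1 + \eps$. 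Finally $\lambda_2^2 = (\sqrt{c-1}+\sqrt{d-1})^2 = (c-1)\bigl(1 + \frac{d-1}{c-1} + 2\sqrt{\frac{d-1}{c-1}}\bigr)$, and — wait, that gives an extra factor $c-1$. So the division must be by $(c-1)|S|$ not $c|S|$, using instead the sharper singular-value bound incorporating that the non-top singular value is at most $\sqrt{c-1}+\sqrt{d-1}$ against the $\ell_2$ norms $\|f\| \le \sqrt{|S|(1-|S|/n)} < \sqrt{|S|}$; one sets $u = \sqrt{(c-1)|S|/|N(S)|}$ only after verifying the inhomogeneous term scales correctly — this final normalization is the one genuinely fiddly point, and I would handle it by carrying the $(1+\eps)$ factor loosely throughout and checking the leading constant at the very end against $(\sqrt{c-1}+\sqrt{d-1})^2/(c-1)$.
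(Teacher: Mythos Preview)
Your approach is essentially the paper's: apply the bipartite expander mixing lemma with $T=N(S)$, use $e(S,N(S))=c|S|$, bound $|N(S)|/|R|\le d\alpha$ where $\alpha=|S|/|L|$, and absorb the $\alpha$-dependent slack into the $(1+\eps)$ factor by choosing $\delta$ small.

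The confusion at the end is a dropped factor. With $u=\sqrt{c|S|/|N(S)|}$ you have $\sqrt{|S|\,|N(S)|}=u\,|N(S)|/\sqrt{c}$, so dividing the mixing inequality by $|N(S)|$ gives
\[
u^2\Bigl(1-\tfrac{|N(S)|}{m}\Bigr)\;\le\;\frac{\lambda_2}{\sqrt{c}}\,u,
\]
not $\lambda_2 u$. Hence $\frac{c|S|}{|N(S)|}=u^2\le \lambda_2^2\big/\bigl(c\,(1-|N(S)|/m)^2\bigr)$, which matches the paper's intermediate bound $\frac{\lambda^2}{c}(1-d\alpha)^{-2}$. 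From there the $c\to c-1$ normalization is handled by the crude inequality $1/c\le 1/(c-1)$, giving
\[
\frac{c|S|}{|N(S)|}\;\le\;\frac{(\sqrt{c-1}+\sqrt{d-1})^2}{c-1}\,(1-d\alpha)^{-2}
\;=\;\Bigl(1+\tfrac{d-1}{c-1}+2\sqrt{\tfrac{d-1}{c-1}}\Bigr)(1-d\alpha)^{-2},
\]
and choosing $\delta$ so that $(1-d\delta)^{-2}\le 1+\eps$ finishes. There is no need to set $u=\sqrt{(c-1)|S|/|N(S)|}$ or to sharpen $\|f\|$; the $(c-1)$ in the denominator is obtained by simply weakening $c$ to $c-1$ at the last step.
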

\begin{proof}
The expander mixing lemma for biregular graphs says that for every $S \subseteq L$, $T \subseteq R$ we have
\begin{equation*}
    \left|\frac{\abs{e(S,T)}}{\abs{E}} - \frac{\abs{S}}{\abs{L}} \cdot \frac{\abs{T}}{\abs{R}}\right| \leq
    \frac{\lambda}{\sqrt{cd}} \sqrt{\frac{\abs{S}}{\abs{L}} \cdot \frac{\abs{T}}{\abs{R}}}
\end{equation*}
where $\lambda$ is the second largest eigenvalue of $G$ (see, e.g., \cite{haemers1995interlacing}).
It is clarified that we consider the spectrum of $G$ as an adjacency operator\inote{what do you mean "as an adjacency operator"? maybe point to some explanation in the preliminaries}\rnote{Wrote about it in the prelim}, so the largest eigenvalue is $\sqrt{cd}$.

Picking $T = N(S)$ means all edges coming out from $S$ are in the cut, namely $\abs{e(S,T)} = c \abs{S}$.
Plugging that in gives
\begin{equation*}
    \left|\frac{c \abs{S}}{c \abs{L}} - \frac{\abs{S}}{\abs{L}} \cdot \frac{\abs{N(S)}}{\abs{R}}\right| \leq
    \frac{\lambda}{\sqrt{cd}} \sqrt{\frac{\abs{S}}{\abs{L}} \cdot \frac{\abs{N(S)}}{\abs{R}}}
    .
\end{equation*}
Multiplying both sides by $\frac{\abs{L}}{\abs{S}}$ gives
\begin{equation}\label{eml:ineq}
    \left| 1 - \frac{\abs{N(S)}}{\abs{R}} \right| \leq
    \frac{\lambda}{\sqrt{cd}} \sqrt{\frac{\abs{S}}{\abs{L}} \cdot \frac{\abs{N(S)}}{\abs{R}}}
    \cdot \frac{\abs{L}}{\abs{S}}
    =
    \frac{\lambda}{\sqrt{cd}} \sqrt{\frac{\abs{N(S)}}{\abs{R}} \cdot \frac{\abs{L}}{\abs{S}}} =
    \frac{\lambda}{\sqrt{cd}} \sqrt{\frac{\abs{N(S)}}{\abs{S}}} \cdot \sqrt{\frac{d}{c}} =
    \frac{\lambda}{c} \sqrt{\frac{\abs{N(S)}}{\abs{S}}}
\end{equation}
where we also used the fact that $\abs{E} = c \abs{L} = d \abs{R}$.

Let us assume that $\abs{S} = \alpha \abs{L}$. Then we can upper bound $\abs{N(S)}$ by
\begin{equation*}
    \abs{N(S)} \leq c \abs{S} =
    \alpha c \abs{L} =
    \alpha d \abs{R}
\end{equation*}
and so we have
\begin{equation*}
    1 - \frac{\abs{N(S)}}{\abs{R}} \geq
    1 - \frac{d \alpha \abs{R}}{\abs{R}} = 
    1 - d \alpha
    .
\end{equation*}
We square \eqref{eml:ineq} and plug in the last inequality to get
\begin{equation*}
    \pare{1 - d \alpha}^2 \leq \frac{\lambda^2}{c} \cdot \frac{\abs{N(S)}}{c\abs{S}}
    .
\end{equation*}
Recall that $G$ is bipartite Ramanujan, so $\abs{\lambda} \leq \sqrt{d-1} + \sqrt{c-1}$.
Use that and rearrange:
\begin{align*}
\frac{c\abs{S}}{\abs{N(S)}} &\leq
\frac{\lambda^2}{c} (1-d \alpha)^{-2} \\&\leq
\frac{d-1 + c-1 + 2\sqrt{d-1}\sqrt{c-1}}{c} (1-d\alpha)^{-2} \\&\leq
\frac{d-1 + c-1 + 2\sqrt{d-1}\sqrt{c-1}}{c-1} (1-d\alpha)^{-2} \\&=
\pare{1 + \frac{d-1}{c-1} + 2\frac{\sqrt{d-1}}{\sqrt{c-1}}} (1-d\alpha)^{-2}
.
\end{align*}
The claim is proven by noting that there is some $\delta > 0$ such that $(1-d \alpha)^{-2} \leq 1+\eps$ for every $\alpha < \delta$,
namely whenever $\abs{S} \leq \delta \abs{L}$.
\end{proof}
\inote{metzuyan! just slow enough for me !!!}
\rnote{:)}

Kahale proved (\cite[Thm 4.2]{kahale1995eigenvalues}) that in $d$-regular Ramanujan graphs (not necessarily bipartite),
small induced subgraphs have average degree at most $1 + \sqrt{d-1}$.
Interestingly, this result can be deduced almost immediately from \cref{thm:main-tech}.
This is due to the following lemma, proven in \nameref{app}, 
which asserts that the \emph{edge-vertex incidence graph} (see \cite{sipser1996expander})
of a $d$-regular Ramanujan graph is a $(2,d)$-biregular Ramanujan graph:
\begin{restatable}{lemma}{vve}\label{vve}
Let $G$ be a $d$-regular Ramanujan graph,
and $G'$ its edge-vertex incidence graph.
Then $G'$ is a $(2,d)$-biregular Ramanujan graph.
\end{restatable}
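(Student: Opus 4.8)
The plan is to exhibit an explicit relationship between the eigenvalues of a $d$-regular graph $G$ and those of its edge-vertex incidence graph $G'$, and then check that the Ramanujan condition transfers across this relationship. Recall that $G'=(E_G\sqcup V_G,\tilde E)$, where we connect an edge $e\in E_G$ to a vertex $v\in V_G$ precisely when $v$ is an endpoint of $e$. Every $e\in E_G$ has exactly two endpoints, so $G'$ is left-regular of degree $2$, and every $v\in V_G$ lies on exactly $d$ edges, so $G'$ is right-regular of degree $d$; thus $G'$ is $(2,d)$-biregular, and its trivial eigenvalues are $\pm\sqrt{2d}$ together with $0$ (the latter appearing because $|E_G|>|V_G|$ whenever $d>2$). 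It remains to locate the nontrivial spectrum inside $\{0\}\cup[\sqrt{d-1}-1,\sqrt{d-1}+1]$, since here $c=2$, so $\sqrt{c-1}=1$.

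The key step is the standard computation of $(A_{G'})^{\!\top}A_{G'}$ and $A_{G'}(A_{G'})^{\!\top}$. Writing $B$ for the $V_G\times E_G$ unsigned incidence matrix (so $A_{G'}=\begin{pmatrix}0 & B^{\!\top}\\ B & 0\end{pmatrix}$ after ordering the parts as $E_G$ then $V_G$), one has the well-known identities $BB^{\!\top}=A_G+dI$ and $B^{\!\top}B=A_{L(G)}+2I$, where $L(G)$ is the line graph of $G$. Consequently the nonzero squared singular values of $B$ are exactly $\mu+d$ as $\mu$ ranges over the eigenvalues of $A_G$, and the nonzero eigenvalues of $A_{G'}$ are $\pm\sqrt{\mu+d}$ for those $\mu$ with $\mu+d\neq 0$ (any remaining multiplicity in $B^{\!\top}B$ at eigenvalue $0$ simply contributes extra $0$'s to the spectrum of $A_{G'}$, which is permitted for a bipartite Ramanujan graph). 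Now I plug in the Ramanujan bound for $G$: a nontrivial eigenvalue $\mu$ of $A_G$ satisfies $|\mu|\le 2\sqrt{d-1}$, hence
\begin{equation*}
    (\sqrt{d-1}-1)^2 = d-2\sqrt{d-1} \;\le\; \mu+d \;\le\; d+2\sqrt{d-1} = (\sqrt{d-1}+1)^2,
\end{equation*}
so $\sqrt{\mu+d}\in[\sqrt{d-1}-1,\sqrt{d-1}+1]$ exactly, and the corresponding $\pm\sqrt{\mu+d}$ lie in the allowed bipartite-Ramanujan bands. The trivial eigenvalue $\mu=d$ of $A_G$ maps to $\pm\sqrt{2d}$, which are precisely the trivial eigenvalues of the $(2,d)$-biregular graph, so it does not contribute a spurious nontrivial eigenvalue.

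The one point requiring a little care — and the part I expect to be the main (minor) obstacle — is the bookkeeping of eigenvalue $0$ and of the trivial eigenvectors: one must confirm that the eigenvalue $0$ of $A_{G'}$, whatever its multiplicity, is legitimately a trivial (or at least allowed) eigenvalue for a $(2,d)$-biregular Ramanujan graph, and that the eigenvector realizing $\mu=d$ on the $V_G$ side indeed pushes forward to the trivial $\pm\sqrt{2d}$-eigenvectors rather than to a nontrivial eigenvalue. This is where one uses the clause in the bipartite-Ramanujan definition that explicitly permits $\lambda=0$ among the trivial eigenvalues. Once that is pinned down, the argument is complete: every nontrivial eigenvalue of $A_{G'}$ is of the form $\pm\sqrt{\mu+d}$ with $\mu$ a nontrivial eigenvalue of $A_G$, and the Ramanujan bound on $\mu$ forces it into $\{0\}\cup[\sqrt{d-1}-1,\sqrt{d-1}+1]$ in absolute value, which is exactly the bipartite Ramanujan condition for $(c,d)=(2,d)$.
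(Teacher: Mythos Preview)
Your argument is correct and is essentially the same as the paper's: both hinge on the identity $BB^\top = A_G + dI$ (in the paper's notation, $M^\top M = dI + A_G$) to conclude that every nontrivial eigenvalue $\lambda$ of $G'$ satisfies $\lambda^2 = \mu + d$ for some nontrivial eigenvalue $\mu$ of $G$, and then plug in $|\mu|\le 2\sqrt{d-1}$ to land $|\lambda|$ in $[\sqrt{d-1}-1,\sqrt{d-1}+1]$. Your additional bookkeeping about the zero eigenvalue and the trivial $\mu=d$ case is a bit more explicit than the paper's version but not a different idea.
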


We state and prove Kahale's bound, but we will not use it in our construction.
\begin{corollary}
Let $G = (V_G, E_G)$ be a $d$-regular Ramanujan graph, and let $\eps > 0$.
Then there exists $\delta > 0$ such that for every induced subgraph $S$
with at most $\delta \abs{V_G}$ vertices, the average
degree of $S$ is at most
\begin{equation*}
    \overline{d_S} :=
    \frac{2\abs{E_S}}{\abs{V_S}} \leq
    1 + (1+\eps) \sqrt{d-1}
    .
\end{equation*}
\end{corollary}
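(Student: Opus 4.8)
The plan is to deduce the corollary directly from \cref{thm:main-tech} applied to the edge-vertex incidence graph $G'$ of $G$, using \cref{vve} to know that $G'$ is $(2,d)$-biregular Ramanujan. Recall that $G'$ has a left vertex for every edge of $G$ and a right vertex for every vertex of $G$, and the left vertex corresponding to $e=\{u,v\}$ is adjacent to the right vertices corresponding to $u$ and $v$; thus the left-regularity is $c=2$ and the right-regularity is $d$.

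First I would translate an induced subgraph of $G$ into a left-set of $G'$. Given an induced subgraph on vertex set $V_S\subseteq V_G$ with edge set $E_S$, let $S:=E_S\subseteq L_{G'}$ be the corresponding set of left vertices of $G'$. Then $N_{G'}(S)$ is exactly the set of $G$-vertices incident to some edge of $E_S$; since the subgraph is induced and (barring isolated vertices, which only help) every vertex of $V_S$ meets an edge of $E_S$, we get $N_{G'}(S)=V_S$, so $|N_{G'}(S)|=|V_S|$. The left side of $G'$ has size $|L_{G'}|=|E_G|$, so the condition $|S|\le \delta'|L_{G'}|$ translates to $|E_S|\le \delta'|E_G|$; since $|E_G|=\tfrac{d}{2}|V_G|$ and $|E_S|\le \binom{|V_S|}{2}$ is controlled by $|V_S|$ when $|V_S|\le \delta|V_G|$, choosing $\delta$ small enough in terms of $\delta'$ (hence in terms of $\eps,d$) makes the hypothesis of \cref{thm:main-tech} hold.

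Now I would simply invoke \cref{thm:main-tech} with parameters $c=2$, $d=d$: it gives
\begin{equation*}
    \frac{2|E_S|}{|V_S|}=\frac{c|S|}{|N_{G'}(S)|}\le 1+(1+\eps)\sqrt{\frac{d-1}{c-1}}=1+(1+\eps)\sqrt{d-1},
\end{equation*}
which is precisely $\overline{d_S}\le 1+(1+\eps)\sqrt{d-1}$, as desired.

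The only slightly delicate point — not really an obstacle — is the bookkeeping in the translation: ensuring that $N_{G'}(S)$ equals $V_S$ rather than something larger (one must take $V_S$ to be exactly the set of endpoints of $E_S$, which is the natural reading of "induced subgraph", and isolated vertices can be ignored since they only decrease the average degree), and converting the "small left set" hypothesis of \cref{thm:main-tech} (measured as a fraction of $|E_G|$) into the "few vertices" hypothesis of the corollary (measured as a fraction of $|V_G|$) with an appropriate choice of $\delta$. Both are routine once \cref{vve} is in hand; the substance of the result is entirely carried by \cref{thm:main-tech} and \cref{vve}.
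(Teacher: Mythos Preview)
Your proof is correct and follows essentially the same approach as the paper: pass to the edge--vertex incidence graph $G'$, invoke \cref{vve} to get that $G'$ is $(2,d)$-biregular Ramanujan, identify an induced subgraph $(V_S,E_S)$ with the left-set $E_S\subseteq L_{G'}$ whose neighbourhood is $V_S$, and apply \cref{thm:main-tech} with $c=2$. The only cosmetic difference is in the vertex-to-edge bookkeeping: the paper uses the sharper bound $|E_S|\le \tfrac{d}{2}|V_S|$ (from $d$-regularity), which lets the same $\delta$ work in both statements, whereas your $\binom{|V_S|}{2}$ bound would force a smaller $\delta$ --- but this is immaterial.
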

\begin{proof}
Let $G = (V_G, E_G)$ be a $d$-regular Ramanujan graph
and $\eps > 0$.
We define $G' = (L_{G'} \sqcup R_{G'}, E_{G'})$ as the edge-vertex incidence graph,
namely $L_{G'} = E_G$, $R_{G'} = V_G$,
and for every edge $e = \{u,v\}$ in $G$
we have the two edges $\{e, u\}$ and $\{e, v\}$ in $G'$.
Since the degree of every vertex in $G$ is $d$,
and since every edge has two endpoints,
we have that $G'$ is a $(2,d)$-biregular graph.
\cref{vve}
asserts that $G'$ is Ramanujan in the bipartite sense.
By \cref{thm:main-tech}, there exists $\delta > 0$ such that if $T \subseteq L_{G'}$ is of size at most $\delta \abs{L_{G'}}$, then
\begin{equation*}
    \frac{2\abs{T}}{\abs{N_{G'}(T)}} \leq 1 + (1+\eps) \sqrt{d-1} .
\end{equation*}

A subgraph $S = (V_S, E_S)$ of $G$
satisfies that $E_S$ is a subset of left-side vertices in $G'$,
$V_S$ is a subset of right-side vertices in $G'$,
and $V_S = N_{G'}(E_S)$
(because if an edge is in the subgraph then both of its endpoints are in the subgraph, and we assume that the subgraph does not contain an isolated vertex).
Therefore, if $E_S$ is sufficiently small,
namely if $\abs{E_S} \leq \delta \abs{L_{G'}} = \delta \abs{E_G}$,
then by \cref{thm:main-tech} the average degree of $N_{G'}(E_S) = V_S$ is
bounded by $1 + (1+\eps)\sqrt{d-1}$.

We add that if we wish to find a bound the number of vertices,
we note that $\abs{E_S} \leq \frac{d}{2} \abs{V_S}$.
So every induced subgraph with no more than $\frac{2}{d} \delta \abs{E_G} = \delta \abs{V_G}$
vertices will satisfy the required average degree bound.
\end{proof}

\subsection{Proof of Theorem \ref{thm:main-tech}}

\cref{thm:main-tech} is proven by enumerating non-backtracking paths.
A non-backtracking path of length $\ell$ is a sequence of edges $((s(e_i), t(e_i)))_{i=1}^\ell$ such that for every $i$, $t(e_i) = s(e_{i+1})$ and $s(e_i) \neq t(e_{i+1})$.

For a bipartite graph $G$ and a subset $S$ of left side vertices we define $M_\ell(S)$ to be the number of all non-backtracking paths whose all left-side vertices are in $S$,
and $M_\ell(S,G)$ to be the number of non-backtracking paths whose first and last left-side vertices are in $S$.
Clearly $M_\ell(S) \leq M_\ell(S,G)$, as paths of the latter type may leave $S \sqcup N(S)$ (before re-entering $S$ at the last step).
We use a lower bound on $M_\ell(S)$ due to \cite{kamber2019}:
\nnote{This seems unrelated here, perhaps belongs further down, inside the proof of thm 2}
\rnote{reordered things -- first state lower \& upper bound, then use them to prove the thm, then the long technical part of getting the upper bound}
\begin{lemma}\label{amitay}
For every undirected bipartite graph $G = (L_G \sqcup R_G, E_G)$ and integer $l$ it holds that
\begin{equation*}
    M_\ell(L_G) \geq
    \abs{E_G}
    \pare{\sqrt{(\bar{d}_L-1)(\bar{d}_R-1)}}^{\ell-1}
\end{equation*}
where $\bar{d}_L, \bar{d}_R$ are the average degrees of the left and right sides of $G$ respectively.
\end{lemma}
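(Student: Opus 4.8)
\noindent\textbf{Proof proposal for Lemma~\ref{amitay}.}
The plan is to count non-backtracking paths starting from \emph{every} edge of $G$ and to bound, from below, the number of ways each such path can be extended one more step, using convexity (Jensen) to replace the degrees encountered along the way by the average degrees. First I would set up the recursion. Orient each edge in both directions, so there are $2\abs{E_G}$ directed edges; a non-backtracking path of length $\ell$ is determined by a choice of a directed starting edge $(u_0,v_0)$ together with, at each subsequent vertex, a choice of outgoing edge different from the one just traversed. Writing $m_\ell$ for the number of non-backtracking \emph{directed} walks of edge-length $\ell$ (so $M_\ell(L_G)$ counts the undirected versions whose vertices all lie in the relevant side; since $G$ is bipartite, a walk that starts on an $L$-vertex alternates sides, and all its vertices automatically lie in $L_G \sqcup R_G = V_G$, so the side constraint is vacuous and $M_\ell(L_G)$ is just the total count up to the factor of $2$ from orientation), the key inequality is that extending a walk currently sitting at a vertex $w$ of degree $\deg(w)$ contributes a factor $\deg(w)-1$.

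The second step is the averaging. Fix the parity so that after an even number of steps we are at an $L$-vertex and after an odd number at an $R$-vertex (or vice versa). When I sum over all length-$\ell$ non-backtracking walks the product of the $(\deg(w_i)-1)$ factors over the intermediate vertices $w_1,\dots,w_{\ell-1}$, I want to lower-bound this sum. The clean way is to proceed one step at a time and to use the following averaging fact: if we are distributing a ``mass'' over directed edges and the mass arriving at a vertex $w$ gets multiplied by $\deg(w)-1$ and redistributed, then summing over all vertices of a fixed side $X$, $\sum_{w \in X}(\deg(w)-1)\cdot(\text{incoming mass at }w)$, is minimized relative to the total incoming mass when the incoming mass is spread proportionally to degrees — and $\frac{\sum_{w\in X}\deg(w)(\deg(w)-1)}{\sum_{w\in X}\deg(w)} \ge \bar d_X - 1$ by Jensen / Cauchy–Schwarz applied to the degree sequence weighted by itself. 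Actually the cleanest route is: the number of directed non-backtracking walks is exactly the sum over directed edges $e$ of the $(\ell-1)$-fold product of the nonbacktracking transfer operator $B$ applied to the all-ones vector; and $\mathbf{1}^\top B^{\ell-1}\mathbf{1} \ge (\text{spectral-radius-type bound})$. But to stay elementary I would instead do a direct induction: let $a_\ell$ be the number of length-$\ell$ directed non-backtracking walks ending at an $L$-vertex and $b_\ell$ the number ending at an $R$-vertex; then $a_{\ell+1} = \sum_{v\in L}(\deg(v)-1)\cdot(\text{walks of length }\ell\text{ ending at }v)$ and similarly for $b$. Using that, at each stage, the walks ending at a given side are weighted and applying the weighted-Jensen bound $\sum_v (\deg v - 1) x_v \ge (\bar d - 1)\sum_v x_v$ is \emph{not} valid pointwise, so the honest argument must carry the degree-weighting: the right invariant is to weight each walk by nothing and observe that the walks ending at $v$, summed with weight $\deg(v)$, behave well. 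Concretely I would track $P_\ell := \sum_{\text{walks }W\text{ of length }\ell} 1$ and prove $P_{\ell+1} \ge \sqrt{(\bar d_L-1)(\bar d_R-1)}\, P_\ell$ once we are past the first step, by grouping two steps at a time and using Cauchy–Schwarz across the two sides.

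The third step is just to assemble the telescoped bound: $M_\ell(L_G) = \tfrac12 P_\ell \ge \tfrac12 P_1 \cdot \big(\sqrt{(\bar d_L-1)(\bar d_R-1)}\big)^{\ell-1}$, and $P_1 = 2\abs{E_G}$ (each edge, both orientations), giving exactly $\abs{E_G}\big(\sqrt{(\bar d_L-1)(\bar d_R-1)}\big)^{\ell-1}$; the base case $\ell=1$ is the trivial identity $M_1(L_G)=\abs{E_G}$.

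\noindent\textbf{Main obstacle.} The subtle point — and where I expect the real work to be — is that the naive step-by-step inequality ``$\sum_v(\deg v-1)x_v \ge (\bar d-1)\sum_v x_v$'' is false in general (it would need $x_v$ uncorrelated with $\deg v$, whereas the non-backtracking dynamics actively concentrates walks on high-degree vertices, which only \emph{helps} us, but one must prove that rather than assume it). The correct handling is to carry the degree-weighted count $\sum_v \deg(v)\cdot(\#\text{walks ending at }v)$ as the potential, use that after one non-backtracking step the walks ending at $v$ number exactly (incoming weighted mass)$\times\frac{\deg(v)-1}{\deg(v)}$ in aggregate, and then invoke Cauchy–Schwarz in the form $\big(\sum_v \deg(v)\big)\big(\sum_v \deg(v)(\deg(v)-1)\big) \ge \big(\sum_v(\deg(v)-1)\cdot\sqrt{\deg v}\cdot\sqrt{\deg v}\big)^2$ restricted appropriately — i.e. one must show the degree-weighted mass itself grows by the geometric-mean factor, pairing an $L$-step with an $R$-step. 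Getting the two-sided bookkeeping to close cleanly (so that the $\sqrt{(\bar d_L-1)(\bar d_R-1)}$, and not something weaker, comes out) is the crux; this is presumably why the authors cite \cite{kamber2019} rather than reprove it inline.
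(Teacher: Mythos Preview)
The paper does not actually prove this lemma; it is quoted as a result of Kamber \cite{kamber2019} and used as a black box. So there is no in-paper argument to compare your proposal against.

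On the merits of your sketch: the overall shape is right, and you correctly isolate the real difficulty. Counting directed non-backtracking walks and trying to show a per-step (or per-two-step) multiplicative growth of at least $\sqrt{(\bar d_L-1)(\bar d_R-1)}$ is exactly the natural plan, and you are right that the naive bound $\sum_v(\deg v-1)x_v \ge (\bar d-1)\sum_v x_v$ is false in general, so something more is needed. Your instinct that the fix involves tracking a degree-weighted potential and pairing an $L$-step with an $R$-step is sound. What is missing is the actual inequality that closes the induction: you gesture at a Cauchy--Schwarz step but never write down a quantity $\Phi_\ell$ and prove $\Phi_{\ell+2}\ge (\bar d_L-1)(\bar d_R-1)\,\Phi_\ell$ together with $\Phi_\ell \le M_\ell$. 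Until that is pinned down, the proposal is an outline rather than a proof --- which you yourself acknowledge in the last paragraph.

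One small correction: in the paper's convention a non-backtracking path is a \emph{directed} sequence $((s(e_i),t(e_i)))_{i=1}^\ell$, so $M_\ell(L_G)$ already counts oriented walks and there is no factor of $\tfrac12$; your $P_\ell$ and $M_\ell(L_G)$ coincide. This only helps (your final bound would come out a factor of $2$ stronger), but it is worth getting the bookkeeping straight.
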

We state and prove an upper bound on $M_\ell(S, G)$:
\begin{lemma}\label{upper_bound}
Let $G$ be a $(c,d)$-biregular Ramanujan graph with $n$ vertices on the left side, and $S$ a subset of the left side.
Then for every integer $\ell$:
\begin{equation*}
    M_{2\ell}(S,G) \leq \abs{S} \pare{(2+\sqrt{d-1})\ell+2} (c-1)^{\ell/2} (d-1)^{\ell/2}
\end{equation*}
provided that $S$ is small enough:
\begin{equation}\label{m_lemma:cond}
    \abs{S}(c-1)^{\ell/2}(d-1)^{\ell/2} \leq n
    .
\end{equation}
\end{lemma}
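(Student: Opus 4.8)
The plan is to reduce the count to a spectral estimate, in the spirit of Kahale. Let $B$ be the $n\times m$ biadjacency matrix of $G$ (so the full adjacency matrix $A$ has off-diagonal blocks $B,B^\top$), and set $M:=BB^\top$, the symmetric $n\times n$ matrix whose $(u,v)$ entry counts length-two walks from $u$ to $v$. Let $W_\ell$ be the $n\times n$ matrix counting non-backtracking walks of length $2\ell$ between left vertices, so that $M_{2\ell}(S,G)=\mathbf 1_S^\top W_\ell\mathbf 1_S$. The first step is to show $W_\ell=p_\ell(M)$ for an explicit polynomial: starting from the standard non-backtracking-walk recursion $C_1=A$, $C_2=A^2-D$, $C_{k+1}=AC_k-(D-I)C_{k-1}$ ($k\ge2$, $D$ the degree matrix), passing to even lengths in the bipartite block decomposition of $C_k$ yields $p_1(x)=x-c$, $p_2(x)=x^2-(2c+d-2)x+c(c-1)$, and $p_\ell(x)=(x-(c+d-2))p_{\ell-1}(x)-(c-1)(d-1)p_{\ell-2}(x)$ for $\ell\ge3$.

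The second step is to bound $p_\ell$ on $\operatorname{spec}(M)$. By \cref{prelim}, since $G$ is bipartite Ramanujan and connected, $\operatorname{spec}(M)$ consists of $cd$ (simple, with eigenvector $\mathbf 1_L$) together with values $\mu=\lambda^2$ for the non-trivial eigenvalues $\lambda$ of $A$, all of which satisfy $\mu\in\{0\}\cup[(\sqrt{d-1}-\sqrt{c-1})^2,\,(\sqrt{d-1}+\sqrt{c-1})^2]$. Put $\rho:=(c-1)(d-1)$ and $\sigma:=\mu-(c+d-2)$; the characteristic polynomial of the length-$\ell$ recursion is $t^2-\sigma t+\rho$. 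For $\mu=cd$ one has $\sigma=\rho+1$, roots $\rho$ and $1$, and solving with the initial data gives $p_\ell(cd)=\tfrac{c}{c-1}\rho^\ell$. For $\mu=0$ the roots are $-(c-1)$ and $-(d-1)$, but the initial data forces the $-(d-1)$ coefficient to vanish, so $|p_\ell(0)|=\tfrac{c}{c-1}(c-1)^\ell\le 2\rho^{\ell/2}$. The decisive observation is that for every $\mu$ in the Ramanujan interval $|\sigma|\le 2\sqrt\rho$, so the roots are a complex-conjugate pair of modulus $\sqrt\rho$ (a real double root $\pm\sqrt\rho$ at the two endpoints). This is precisely where both Ramanujan bounds are needed: the upper bound on $|\lambda|$ keeps $\sigma\le 2\sqrt\rho$ and the lower bound keeps $\sigma\ge-2\sqrt\rho$, ruling out a real root of modulus exceeding $\sqrt\rho$ (which would give growth $\gg\rho^{\ell/2}$).

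The main obstacle, as I see it, is the bound on the interior eigenvalues. Writing $p_\ell(\mu)=\rho^{\ell/2}u_\ell$, the sequence $u_\ell$ satisfies the Chebyshev recursion $u_\ell=2\cos\theta\,u_{\ell-1}-u_{\ell-2}$ with $2\cos\theta=\sigma/\sqrt\rho$ and seeds $u_1=(\mu-c)/\sqrt\rho$, $u_2=p_2(\mu)/\rho$; hence $u_\ell=u_2\,U_{\ell-2}(\cos\theta)-u_1\,U_{\ell-3}(\cos\theta)$ where $U_k$ is the Chebyshev polynomial of the second kind, with $|U_k(\cos\theta)|\le k+1$. I want to conclude $|u_\ell|\le(2+\sqrt{d-1})\,\ell$ uniformly in $\theta$. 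A bare triangle inequality loses too much near the endpoints, where the $U_k$ are as large as $k+1$; one has to exploit the cancellation between the two terms — equivalently, pass to the double-root limit, where $u_\ell=\tfrac{c}{c-1}+\bigl(\tfrac{c-2}{c-1}+\tfrac{d-2}{\sqrt\rho}\bigr)\ell$, verify $|u_\ell|\le(2+\sqrt{d-1})\ell$ there (using $\tfrac{d-2}{\sqrt\rho}\le\sqrt{(d-1)/(c-1)}\le\sqrt{d-1}$ and $\tfrac{c+(c-2)\ell}{c-1}\le 2\ell$), and interpolate, the endpoints being the extremal case. This gives $|p_\ell(\mu)|\le(2+\sqrt{d-1})\,\ell\,\rho^{\ell/2}$ for every non-trivial $\mu$ (including $\mu=0$, since $2\rho^{\ell/2}\le(2+\sqrt{d-1})\ell\,\rho^{\ell/2}$).

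To finish, expand $\mathbf 1_S=\tfrac{|S|}{n}\mathbf 1_L+\sum_{\mu\neq cd}\langle\mathbf 1_S,\phi_\mu\rangle\phi_\mu$ in an orthonormal eigenbasis of $M$, so that
\[
M_{2\ell}(S,G)=\frac{|S|^2}{n}\,p_\ell(cd)+\sum_{\mu\neq cd}\langle\mathbf 1_S,\phi_\mu\rangle^2\,p_\ell(\mu).
\]
For the first term, $p_\ell(cd)=\tfrac{c}{c-1}\rho^\ell$ together with the hypothesis $|S|\rho^{\ell/2}\le n$ gives $\tfrac{|S|^2}{n}p_\ell(cd)\le\tfrac{c}{c-1}|S|\rho^{\ell/2}\le 2|S|\rho^{\ell/2}$. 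For the remaining terms, $|p_\ell(\mu)|\le(2+\sqrt{d-1})\ell\,\rho^{\ell/2}$ and $\sum_{\mu\neq cd}\langle\mathbf 1_S,\phi_\mu\rangle^2\le\|\mathbf 1_S\|^2=|S|$. Summing, $M_{2\ell}(S,G)\le|S|\,\rho^{\ell/2}\bigl((2+\sqrt{d-1})\ell+2\bigr)$, which is exactly the claimed bound since $\rho^{\ell/2}=(c-1)^{\ell/2}(d-1)^{\ell/2}$.
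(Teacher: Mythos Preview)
Your overall architecture is exactly the paper's: write $M_{2\ell}(S,G)=\langle p_\ell(M)\mathbf 1_S,\mathbf 1_S\rangle$ for the same recursion $p_{\ell+1}=(x-(c+d-2))p_\ell-(c-1)(d-1)p_{\ell-1}$, split off the trivial eigenvalue $cd$ (your computation $p_\ell(cd)=\tfrac{c}{c-1}\rho^\ell$ and the use of the size condition match the paper line by line), and bound $p_\ell$ on the Ramanujan set. The Chebyshev packaging is just a relabelling of the paper's $\alpha\lambda_1^\ell+\beta\lambda_2^\ell$ computation; both hinge on the same fact that on the Ramanujan interval the characteristic roots have modulus exactly $\sqrt\rho$.

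The one genuine gap is the sentence ``interpolate, the endpoints being the extremal case.'' As written this is an assertion, not an argument: with the seeds $u_1,u_2$ you chose, the triangle inequality is indeed too weak (at the upper endpoint $|u_2|\approx 2\sqrt{d-1}$, so $|u_2|(\ell-1)+|u_1|(\ell-2)\approx 3\sqrt{d-1}\,\ell$), and nothing you wrote explains why the maximum of $|u_\ell(\theta)|$ over $\theta\in[0,\pi]$ should occur at the boundary. The fix is to change seeds. Use $u_0=\tfrac{c}{c-1}$ and $u_1=(\mu-c)/\sqrt\rho$ and write
\[
u_\ell \;=\; u_0\cos(\ell\theta)\;+\;\bigl(u_1-u_0\cos\theta\bigr)\,\frac{\sin(\ell\theta)}{\sin\theta}\,,
\]
which is the general real solution of $u_\ell=2\cos\theta\,u_{\ell-1}-u_{\ell-2}$. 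Since $\bigl|\sin(\ell\theta)/\sin\theta\bigr|=|U_{\ell-1}(\cos\theta)|\le\ell$ and $|\cos(\ell\theta)|\le1$, you get the \emph{uniform} bound $|u_\ell|\le u_0+|B(\mu)|\,\ell$ with $B(\mu)=u_1-u_0\cos\theta=\dfrac{(c-2)\mu+c(d-c)}{2(c-1)\sqrt\rho}$. This $B$ is nonnegative and linear in $\mu$, hence maximised at the upper endpoint $\mu=(\sqrt{d-1}+\sqrt{c-1})^2$, where it equals exactly your $\tfrac{c-2}{c-1}+\tfrac{d-2}{\sqrt\rho}$; your endpoint estimate then finishes the job for every $\theta$ at once. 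With this correction your argument is actually cleaner than the paper's, which for interior $\mu$ only obtains $|p_\ell(\mu)|\le 2|\alpha(\mu)|\rho^{\ell/2}$ with a coefficient that blows up near the endpoints and then appeals to ``for large enough $\ell$'' separately for each eigenvalue.
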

\nnote{better define $M(S)$ properly according to your needs and avoid "abuse of notation"}
\rnote{deleted remark}
Before proving the upper bound, we show how these bounds can be combined to obtain \cref{thm:main-tech}.
\begin{proof}[Proof of \cref{thm:main-tech}]
\nnote{nicer to use $\ell$ instead of $l$}
\rnote{done}
Let $\ell$ be an integer to be determined later, $S \subseteq L$ a sufficiently small subset
(where sufficiently smalls means \eqref{m_lemma:cond}).
Denote by $N(S) \subseteq R$ the neighbours of $S$.
The subgraph induced on $S \cup N(S)$ has $c\abs{S}$ edges,
with left degrees all $c$ and average right degree $\bar{d}_R = \frac{c \abs{S}}{\abs{N(S)}}$.
\nnote{need to explain where does each ineq come from
Combining theorem 6 and lemma 9 we have gives:}
\rnote{done}

Chaining the inequalities in \cref{amitay} and \cref{upper_bound}, we have
\begin{equation*}
c \abs{S} \left( (c-1) (\bar{d}_R-1) \right)^{\frac{2\ell-1}{2}} \leq
M_{2\ell}(S) \leq
M_{2\ell}(S,V_G) \leq
\abs{S} \cdot \pare{(2+\sqrt{d-1})\ell+2} \cdot (c-1)^{\ell/2} (d-1)^{\ell/2} 
.
\end{equation*}

Simplifying, we get,
\begin{align*}
c (c-1)^{\ell-\frac{1}{2}} (\bar{d}_R-1)^{\ell-\frac{1}{2}} &\leq
    \pare{(2+\sqrt{d-1})\ell+2} \cdot (c-1)^{\ell/2} \cdot (d-1)^{\ell/2} \\
    (\bar{d}_R-1)^{\ell-\frac{1}{2}} &\leq
    \frac{\pare{(2+\sqrt{d-1})\ell+2}\sqrt{c-1}}{c}\left(\sqrt{\frac{d-1}{c-1}}\right)^\ell \\
    \bar{d}_R - 1 &\leq
    {\pare{\underbrace{\frac{\pare{(2+\sqrt{d-1})\ell+2}\sqrt{c-1}\sqrt{\bar{d}-1}}{c}}_{\bigstar}}^{1/\ell}}
    \cdot \sqrt{\frac{d-1}{c-1}}
\end{align*}
Since $\bar{d} \leq d$, we have that $\bigstar = O(\ell)$,
so $\bigstar^{1/\ell} = O(1)$,
hence for a fixed $\eps > 0$ there exists a constant $\ell$
(that depends only on $\eps, c, d$)
such that $\bigstar^{1/\ell} \leq 1+\eps$;
this $\ell$ determines, via inequality \eqref{m_lemma:cond}, a fixed $\delta$
such that whenever $\abs{S} \leq \delta n$ we have
\begin{equation*}
    \bar{d}_R \leq 1 + (1+\eps) \sqrt{\frac{d-1}{c-1}}
    .
\end{equation*}
\end{proof}

We proceed to prove \cref{upper_bound}.

For a bipartite graph $G = (L_G \sqcup R_G, E_G)$ and an integer $\ell$, we define $A_\ell^{LL}, A_\ell^{LR}, A_\ell^{RL}, A_\ell^{RR}$ as operators corresponding to non-backtracking paths of length $\ell$, i.e.
\begin{equation*}
    A_\ell^{LL}: L^2(L_G) \to L^2(L_G) \qquad,\qquad
    (A_\ell^{LL}f)(x) = \sum\limits_{(e_1, \ldots, e_\ell), t(e_\ell) = x, s(e_1),t(e_\ell) \in L_G} f(s(e_1))
\end{equation*}
with the summation over all non-backtracking paths of length $\ell$, and similarly for the other operators.

Let $M$ be the operator corresponding to a single step from the right side $G$ to the left side of $G$,
namely $M$ has $\abs{R_G}$ rows and $\abs{L_G}$ columns,
with $M_{uv}$ counting the number of edges between $u \in R_G$ and $v \in L_G$ in $G$.
Then the following recursive formulae hold for every integer $\ell>1$:
\begin{align*}
    M^\top A_\ell^{LL} &= A_{\ell+1}^{RL} + (d-1) A_{\ell-1}^{RL}\\
    M^\top A_\ell^{LR} &= A_{\ell+1}^{RR} + (d-1) A_{\ell-1}^{RR}\\
    M A_\ell^{RL} &= A_{\ell+1}^{LL} + (c-1) A_{\ell-1}^{LL}\\
    M A_\ell^{RR} &= A_{\ell+1}^{LR} + (c-1) A_{\ell-1}^{LR}
\end{align*}
The first formula is explained as follows.
Every non-backtracking path from $R$ to $L$ of length $\ell+1$ is composed of
a non-backtracking path from $L$ to $L$ of length $\ell$ plus an extra step
(that's the $M^\top A_\ell^{LL}$ factor.)
The opposite is true, except for paths counted in $M^\top A_\ell^{LL}$
that do backtrack,
namely those made of a non-backtracking path of length $\ell-1$,
and walking back and forth along the same edge.
There are $d-1$ ways to choose that edge
(since it cannot be the one that was last in the path of length $\ell-1$,
otherwise it wouldn't be counted in $M^\top A_\ell^{LL}$),
so we need to subtract $(d-1)A_{\ell-1}^{RL}$.
The rest of the equations are explained in an analog way.

Due to symmetry we have:
\begin{equation*}
    (A_\ell^{LL})^\top = A_\ell^{LL} \qquad,\qquad
    (A_\ell^{RR})^\top = A_\ell^{RR} \qquad,\qquad
    (A_\ell^{LR})^\top = A_\ell^{RL}
\end{equation*}
And since the graph is bipartite we have:
\begin{align*}
    A_{2\ell}^{LR} &= \mathbf{0} \qquad,\qquad A_{2\ell}^{RL} = \mathbf{0} \\
    A_{2\ell+1}^{LL} &= \mathbf{0} \qquad,\qquad A_{2\ell+1}^{RR} = \mathbf{0} 
\end{align*}
These equations yield a recursive formula for $A_\ell^{LL}$, with the following initial conditions:
\begin{equation}\label{rec_eq:1}
\begin{split}
    A_2^{LL} &= M M^\top - cI \\
    A_4^{LL} &= M M^\top A_2^{LL} - (c-1+d-1)A_2^{LL} - c(d-1) I \\
    MM^\top A_\ell^{LL} &= A_{\ell+2}^{LL} + ((c-1)+(d-1))A_\ell^{LL} + (c-1)(d-1) A_{\ell-2}^{LL} \qquad,\qquad \forall \ell \geq 4
\end{split}
\end{equation}
The following lemma, proven in \nameref{app},
suggests a way to find a non-recursive formula for $A_{2\ell}^{LL}$,
given such linear recursive relations with fixed coefficients.

\begin{restatable}{lemma}{linreclemma}
\label{linrec}
Let $(x_n)$ be a series defined via a second order linear recurrence with fixed coefficients $A,B \in \mathbb{C}$:
\begin{equation*}
    x_n = A x_{n-1} + B x_{n-2}
\end{equation*}
Assume $\lambda_1 \neq \lambda_2$ are (real or complex) roots of the characteristic polynomial $\lambda^2 - A \lambda - B$.
Then there are $\alpha, \beta \in \mathbb{C}$, that depend on the initial conditions $x_0, x_1$, such that
\begin{equation*}
    \quad x_n = \alpha \lambda_1^n + \beta \lambda_2^n
\end{equation*}
for every $n \geq 0$.

If the characteristic polynomial has a single root $\lambda$ of multiplicity $2$, then there are $\alpha, \beta \in \mathbb{C}$ such that
\begin{equation*}
    x_n = \alpha \lambda^n + \beta n \lambda^n
\end{equation*}
for every $n \geq 0$.
\end{restatable}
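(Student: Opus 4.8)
The plan is the classical one for linear recurrences with constant coefficients, carried out in two moves: first exhibit a two-parameter family of closed-form sequences that all satisfy the recurrence, then pin down the parameters from the initial data $x_0, x_1$ and invoke uniqueness.

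First I would note that for any root $\mu$ of the characteristic polynomial $\lambda^2 - A\lambda - B$, the geometric sequence $(\mu^n)_{n \geq 0}$ satisfies the recurrence: from $\mu^2 = A\mu + B$ we get $\mu^n = A\mu^{n-1} + B\mu^{n-2}$ for every $n \geq 2$, and since the recurrence is linear, every combination $\alpha \lambda_1^n + \beta \lambda_2^n$ is again a solution. In the distinct-roots case I then solve for $\alpha,\beta$ from the $2 \times 2$ system $\alpha + \beta = x_0$, $\alpha\lambda_1 + \beta\lambda_2 = x_1$; the coefficient matrix is Vandermonde with determinant $\lambda_2 - \lambda_1 \neq 0$, so there is a unique solution $(\alpha,\beta)$, depending only on $x_0, x_1$ and the roots.

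For the repeated root $\lambda$ — equivalently $A = 2\lambda$ and $B = -\lambda^2$, these being exactly the conditions that $\lambda$ be a root of both $\lambda^2 - A\lambda - B$ and its derivative — I would additionally verify that $(n\lambda^n)_{n\geq 0}$ solves the recurrence: substituting, $A(n-1)\lambda^{n-1} + B(n-2)\lambda^{n-2} = 2(n-1)\lambda^n - (n-2)\lambda^n = n\lambda^n$, as required. Matching initial conditions then gives $\alpha = x_0$ and $\alpha\lambda + \beta\lambda = x_1$, which is uniquely solvable whenever $\lambda \neq 0$; the degenerate case $\lambda = 0$ forces $A = B = 0$ and is handled directly (or simply excluded, since the characteristic roots arising from \eqref{rec_eq:1} in our application are nonzero).

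Finally, I would close with the standard uniqueness observation: two sequences that both satisfy $x_n = A x_{n-1} + B x_{n-2}$ and agree at $n = 0$ and $n = 1$ agree everywhere, by a one-line induction on $n$. Applying this to $(x_n)$ and the closed-form sequence constructed above completes the proof. There is essentially no obstacle here — this is a textbook fact — and the only point demanding a moment's care is the verification that $n\lambda^n$ solves the recurrence in the repeated-root case, which is precisely where the multiplicity-two hypothesis (i.e.\ $A = 2\lambda$, $B = -\lambda^2$) is used.
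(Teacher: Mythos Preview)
Your proof is correct and complete. It takes a genuinely different route from the paper's own argument: the paper rewrites the recurrence as the matrix iteration
\[
\begin{bmatrix} x_n \\ x_{n-1} \end{bmatrix}
= D^n \begin{bmatrix} x_1 \\ x_0 \end{bmatrix},
\qquad
D = \begin{bmatrix} A & B \\ 1 & 0 \end{bmatrix},
\]
and then diagonalizes $D$ (distinct roots) or passes to its Jordan form (repeated root) to read off the closed form. Your approach instead verifies directly that $\lambda_i^n$ (and $n\lambda^n$ in the repeated case) solve the recurrence, matches the two initial values via a Vandermonde system, and concludes by uniqueness of solutions with given initial data. Both arguments are standard; yours is slightly more elementary in that it avoids matrix machinery altogether, while the paper's has the mild advantage that the Jordan-form computation mechanically produces the $n\lambda^{n-1}$ term without one having to guess and verify it. You also explicitly flag the degenerate case $\lambda = 0$ with multiplicity two, which the paper's statement and proof silently pass over; your remark that this case does not arise in the application is the right way to dispose of it.
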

We use the lemma to bound the eigenvalues of $A_{2\ell}^{LL}$ given bounds on the spectrum of the biregular graph.
\begin{lemma}\label{poly_bound}
Let $G$ be a $(c,d)$-biregular graph.
Then there is a sequence of polynomials with integer coefficients $(p_\ell(x))$ such that for every eigenpair $(\lambda, v)$ of $G$, $p_\ell(\lambda^2)$ is an eigenvalue of $A_{2\ell}^{LL}$, and moreover, for every $\lambda \in \mathbb{R}$, if
\begin{equation}\label{eq:lambda-bound}
    \abs{\lambda} \in \{0\} \cup [\sqrt{d-1} - \sqrt{c-1}, \sqrt{d-1}+\sqrt{c-1}]
\end{equation}
then
\begin{equation}\label{eq:pl-bound}
    \abs{p_\ell(\lambda^2)} \leq (2+\sqrt{d-1})\ell(c-1)^{\ell/2}(d-1)^{\ell/2}.
\end{equation}
\end{lemma}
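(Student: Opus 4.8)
The plan is to read the polynomials $p_\ell$ straight off the recursion \eqref{rec_eq:1} and then control their values on the Ramanujan interval by a Chebyshev rescaling. For the existence part: the initial data $A_2^{LL}=MM^\top-cI$ and $A_4^{LL}=MM^\top A_2^{LL}-((c-1)+(d-1))A_2^{LL}-c(d-1)I$ give $p_1(x)=x-c$ and $p_2(x)=(x-c)(x-c-d+2)-c(d-1)$, and for $\ell\ge 2$ the recursion reads
\[
  p_{\ell+1}(x)=\bigl(x-(c-1)-(d-1)\bigr)\,p_\ell(x)-(c-1)(d-1)\,p_{\ell-1}(x);
\]
induction on $\ell$ then gives $A_{2\ell}^{LL}=p_\ell(MM^\top)$ with $p_\ell$ an integer polynomial. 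Since $G$ is bipartite, every eigenpair $(\lambda,v)$ of $G$ produces an eigenvector of $MM^\top$ with eigenvalue $\lambda^2$ (and for $\lambda=0$ one still has $0\in\operatorname{spec}(MM^\top)$ because $\abs{L}\ge\abs{R}$), so $p_\ell(\lambda^2)$ is an eigenvalue of $A_{2\ell}^{LL}$.

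For the bound \eqref{eq:pl-bound}, fix a real $\lambda$ with $\abs{\lambda}$ in the set \eqref{eq:lambda-bound}, write $t=\lambda^2$ and $P=(c-1)(d-1)$, and apply \cref{linrec} to the recursion above. Its characteristic polynomial $\mu^2-(t-(c-1)-(d-1))\mu+P$ has root-product $P$ and discriminant $(t-(c-1)-(d-1))^2-4P$, and this discriminant is $\le 0$ precisely when $t\in[(\sqrt{d-1}-\sqrt{c-1})^2,(\sqrt{d-1}+\sqrt{c-1})^2]$ --- that is, exactly for $\lambda\ne 0$ in the allowed set. For such $t$ both characteristic roots have modulus exactly $\sqrt P$, which suggests writing $p_\ell(t)=P^{\ell/2}\,\widetilde{p}_\ell(u)$ with $u:=\frac{t-(c-1)-(d-1)}{2\sqrt P}\in[-1,1]$; the recursion then becomes the Chebyshev recursion $\widetilde{p}_{\ell+1}=2u\,\widetilde{p}_\ell-\widetilde{p}_{\ell-1}$, so
\[
  \widetilde{p}_\ell(u)=\widetilde{p}_0\,T_\ell(u)+\bigl(\widetilde{p}_1-u\,\widetilde{p}_0\bigr)\,U_{\ell-1}(u),
\]
with $T,U$ the Chebyshev polynomials of the first and second kind. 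A short computation with the initial data and the value $p_0=\frac{c}{c-1}$ forced by extrapolating the recursion to $\ell=1$ yields $\widetilde{p}_0=\frac{c}{c-1}$ and $\widetilde{p}_1-u\,\widetilde{p}_0=\frac{(c-2)t+c(d-c)}{2(c-1)\sqrt P}$. Using $\abs{T_\ell}\le 1$ and $\abs{U_{\ell-1}}\le\ell$ on $[-1,1]$, nonnegativity of the second coefficient (since $t\ge 0$, $c\ge 2$, $d\ge c$), and $t\le(\sqrt{d-1}+\sqrt{c-1})^2$, the goal $\abs{\widetilde{p}_\ell(u)}\le(2+\sqrt{d-1})\ell$ --- which, after multiplying back by $P^{\ell/2}=(c-1)^{\ell/2}(d-1)^{\ell/2}$, is exactly \eqref{eq:pl-bound} --- reduces to $\frac{c}{c-1}\le 2$ together with the elementary inequality $(d-2)+\frac{c-2}{\sqrt{c-1}}\sqrt{d-1}\le\sqrt{c-1}\,(d-1)$, valid for all integers $2\le c\le d$ (substitute $x=\sqrt{d-1}$: the difference of the two sides is a quadratic in $x$ that is nondecreasing on $[1,\infty)$ with positive value at $x=1$).

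The single value $\lambda=0$ falls outside this analysis, since then $u\le -1$ (by AM--GM on $(c-1)+(d-1)$) and the Chebyshev bounds no longer apply; the two characteristic roots are now the distinct reals $-(c-1)$ and $-(d-1)$, the latter potentially large. Here one instead checks directly from the recursion, by induction, that $p_\ell(0)=c\,(-1)^\ell(c-1)^{\ell-1}$ --- the coefficient of the dangerous $-(d-1)$ term simply vanishes --- and, since $c\le d$, $c\,(c-1)^{\ell-1}\le(2+\sqrt{d-1})\,\ell\,(c-1)^{\ell/2}(d-1)^{\ell/2}$, so \eqref{eq:pl-bound} holds at $\lambda=0$ as well. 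The heart of the argument, and the place I expect the main obstacle, is the interval case: the closed form supplied by \cref{linrec} degrades near the endpoints of the Ramanujan interval, where the two roots collide and the coefficients in the closed form blow up, and the Chebyshev rescaling is exactly what keeps everything uniformly bounded --- the linear-in-$\ell$ factor in \eqref{eq:pl-bound} is precisely the bound $\abs{U_{\ell-1}}\le\ell$, which is attained near the double-root case.
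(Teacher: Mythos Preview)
Your argument is correct and follows the same skeleton as the paper's: the polynomials $p_\ell$ are read off the recursion \eqref{rec_eq:1}, the characteristic roots of the recursion have modulus exactly $\sqrt{(c-1)(d-1)}$ on the Ramanujan interval, and the case $\lambda=0$ is dispatched separately by noting that the $-(d-1)$-root contributes with coefficient zero.

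The difference lies in how the interval case is handled. The paper invokes \cref{linrec} directly, computes $\alpha(x),\beta(x),\lambda_{1,2}(x)$ explicitly, and then bounds $\abs{p_\ell(x)}\le 2\abs{\alpha(x)}\,P^{\ell/2}$; since $\abs{\alpha(x)}$ contains $\Delta(x)$ in the denominator and blows up near the endpoints, the paper treats the interior of the interval and the two endpoints as three separate cases, and in the interior obtains the stated bound only ``for large enough $\ell$'' (with the threshold depending on $x$). Your Chebyshev rescaling $p_\ell(t)=P^{\ell/2}\tilde p_\ell(u)$ sidesteps this entirely: once the recursion becomes $\tilde p_{\ell+1}=2u\tilde p_\ell-\tilde p_{\ell-1}$ on $u\in[-1,1]$, the standard bounds $\abs{T_\ell}\le 1$ and $\abs{U_{\ell-1}}\le\ell$ are uniform over the whole closed interval, endpoints included, and hold for every $\ell\ge 1$. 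The elementary inequality $(d-2)+\tfrac{c-2}{\sqrt{c-1}}\sqrt{d-1}\le\sqrt{c-1}\,(d-1)$ you isolate is exactly what is needed to push the $U_{\ell-1}$-coefficient below $\sqrt{d-1}$; your verification (quadratic in $x=\sqrt{d-1}$, vertex at $\tfrac12+\tfrac1{2\sqrt{c-1}}\le 1$, value $1/\sqrt{c-1}>0$ at $x=1$) is clean and correct.

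In short, your route is genuinely tidier: it collapses the paper's three interval cases into one, and delivers the lemma exactly as stated --- uniformly in $\lambda$ and for all $\ell\ge 1$ --- whereas the paper's written argument, strictly read, gives the bound only for $\ell$ large depending on $\lambda$ (which is harmless for the downstream application to a fixed finite spectrum, but is weaker than the lemma's statement). Your remark that the linear-in-$\ell$ factor is precisely $\abs{U_{\ell-1}}\le\ell$, saturated at the double-root endpoints, is a nice conceptual explanation of where that factor comes from.
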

\nnote{perhaps defer proof of lemma and first finish pf of thm 2}
\rnote{reordered. I can maybe finish proving \cref{upper_bound} first and then prove this lemma but I think it's OK now because the thm is already proven above.}
\begin{proof}
The recursive formulae proven above \eqref{rec_eq:1} suggest that there is a series of polynomials $p_n(x)$
with integer coefficients such that $A_{2n}^{LL} = p_n(M M^\top)$.
Note that the graph's adjacency matrix is
\begin{equation*}
    A_G = \begin{bmatrix}
        0 & M \\
        M^\top & 0
    \end{bmatrix}
\end{equation*}
And so, if $(\lambda, v)$ is an eigenpair of $G$, then $(\lambda^2, v)$ is an eigenpair of 
\begin{equation*}
A_G^2 = \begin{bmatrix}
    M M^\top & 0 \\ 0 & M^\top M
\end{bmatrix}
.
\end{equation*}
This shows that $p_\ell(\lambda^2)$ is an eigenvalue of $A_{2\ell}^{LL}$ whenever $\lambda$ is an eigenvalue of $G$.
The converse is also true.

The formulae \eqref{rec_eq:1} can be transformed so as to convey that
$p_n(x)$ satisfies these equations:
\begin{gather*}
    p_1(x) = x-c \qquad , \qquad
    p_2(x) = x^2 + (2-2c-d)x + c(c-1) \\
    xp_n(x) = p_{n+1}(x) + (c-1+d-1)p_n(x) + (c-1)(d-1) p_{n-1}(x) 
\end{gather*}
for all $n>1$.
Setting $n=1$ gives an equation involving $p_0(x), p_1(x), p_2(x)$.
We can solve this equation for $p_0(x)$
and get a simpler description of the initial conditions:
\begin{gather}
    p_0(x) = \frac{c}{c-1} \qquad , \qquad
    p_1(x) = x-c \label{req_eq:2} \\
    xp_n(x) = p_{n+1}(x) + (c-1+d-1)p_n(x) + (c-1)(d-1) p_{n-1}(x) 
    \label{req_eq:3}
\end{gather}
for all $n > 0$.

We fix some $t$
\nnote{why are we denoting it $t$ and not $\lambda$?}%
\rnote{because I reserved $\lambda$ for the roots of the characteristic polynomial associated with the linear recurssion. Maybe I should swap the two?}%
that satisfies \eqref{eq:lambda-bound}, namely such that \[\abs{t} \in \{0\} \cup [\sqrt{d-1} - \sqrt{c-1}, \sqrt{d-1}+\sqrt{c-1}].\]
We first deal with the case
where $\abs{t} \in (\sqrt{d-1}-\sqrt{c-1}, \sqrt{d-1}+\sqrt{c-1})$, and later we will consider the edge cases where $t$ is one of the endpoints of the segment or $0$.
Let us write $x = t^2$.
We have that for this fixed $x$, the series $(p_n(x))_n$ satisfies a second order linear recurrence with fixed coefficients.
Using \cref{linrec}, we conclude that there are functions $\alpha(x), \lambda_1(x), \beta(x), \lambda_2(x)$ that depend only on $x, c$ and $d$, such that
\begin{equation}\label{eq:pn}
    p_n(x) =
    \alpha(x) (\lambda_1(x))^n + 
    \beta(x) (\lambda_2(x))^n
\end{equation}
for every $n$.

In order to find $\lambda_1, \lambda_2$ we solve for $\lambda$ the characteristic polynomial,
namely the following quadratic equation derived from \eqref{req_eq:3}:
\begin{equation*}
    x\lambda = \lambda^2 + (c-1+d-1)\lambda + (c-1)(d-1)
\end{equation*}
To obtain
\begin{equation*}
    \lambda_{1,2}(x) = \frac{x - (c-1) - (d-1) \pm \sqrt{\Delta(x)}}{2}
\end{equation*}
where
\begin{equation}\label{eq:delta}
    \Delta(x) = x^2 - 2x((c-1)+(d-1)) + (c-d)^2
    .
\end{equation}
Using the initial values for $p_0(x), p_1(x)$ from \eqref{req_eq:2}, and plugging back into \eqref{eq:pn}
we get the equations
\begin{align*}
    \frac{c}{c-1} &= \alpha(x) (\lambda_1(x))^0 + \beta(x) (\lambda_2(x))^0 = \alpha(x) + \beta(x) \\
    x-c &= \alpha(x) (\lambda_1(x))^1 + \beta(x) (\lambda_2(x))^1 = \alpha(x) \lambda_1(x) + \beta(x) \lambda_2(x)
\end{align*}
whose solution is
\begin{align*}
    \alpha(x) &= \frac{(c-1)x - (c-1)^2 - (c-1) + (c-1)(d-1) + (c-1) \sqrt{\Delta(x)} - x + d-1 + \sqrt{\Delta(x)}}{2(c-1)\sqrt{\Delta(x)}} \\
    \beta(x) &= \frac{c}{c-1} - \alpha(x)
    .
\end{align*} 
We check when $\Delta(x) = 0$ by solving \eqref{eq:delta} for $x$:
\begin{align*}
    x_{1,2} &=
    \frac{2((c-1)+(d-1)) \pm \sqrt{4(c-1+d-1)^2 - 4(c-d)^2}}{2} \\&=
    (c-1+d-1) \pm \sqrt{(c+d)^2 - 4(c+d) + 4 - (c-d)^2} \\&=
    (c-1+d-1) \pm \sqrt{c^2 + 2cd + d^2 - 4c - 4d + 4 - c^2 + 2cd - d^2} \\&=
    (c-1+d-1) \pm \sqrt{4cd - 4c - 4d + 4} \\&=
    (c-1+d-1) \pm 2\sqrt{c-1}\sqrt{d-1} \\&=
    (\sqrt{d-1} \pm \sqrt{c-1})^2
\end{align*}
We see that $\Delta(x)$ is quadratic in $x$ and has roots at $(\sqrt{d-1} \pm \sqrt{c-1})^2$.
This gives a nice factorization of $\Delta(x)$:
\begin{align*}
    \Delta(x) &=
    x^2 - 2x((c-1)+(d-1)) + (c-d)^2 \\&=
    \pare{x - \pare{\sqrt{d-1} + \sqrt{c-1}}^2}
    \pare{x - \pare{\sqrt{d-1} - \sqrt{c-1}}^2}
\end{align*}
Recall that for the $x$ we fixed
\nnote{add ref to appropriate inequality here}%
\rnote{done}%
we have $\sqrt{x} = t \in (\sqrt{d-1}-\sqrt{c-1}, \sqrt{d-1}+\sqrt{c-1})$,
so the first term in the product is negative and the second term is positive,
so $\Delta < 0$, and so $\lambda_{1,2}$ are complex numbers (conjugate to one another),
with magnitude
\begin{equation}
\begin{split} \label{eq:magic-cancellation}
        \abs{\lambda_{1,2}}^2 &=
    \frac{(x-(c-1)-(d-1))^2 - \Delta(x)}{4} \\&=
    \frac{x^2 - 2x((c-1)+(d-1)) + (c-1+d-1)^2 - (x^2 - 2x((c-1)+(d-1)) + (c-d)^2)}{4} 
    \\&=
    \frac{(c+d-2)^2 - (c-d)^2}{4} = (c-1)(d-1)
\end{split}
\end{equation}
A very similar calculation shows that $\alpha, \beta$ are conjugates with magnitude
\begin{equation*}
    \abs{\alpha}^2 = \abs{\beta}^2 = \frac{x(x-cd)}{\Delta(x) \cdot (c-1)}
\end{equation*}
This finishes the proof for all such $x$'s:
\begin{align*}
    \abs{p_\ell(x)} &=
    \abs{\alpha(x) \lambda_1^\ell + \beta(x) \lambda_2^\ell} \leq
    \abs{\alpha(x) \lambda_1^\ell} + \abs{\beta(x) \lambda_2^\ell} \\&=
    \abs{\alpha(x)} \abs{\lambda_1}^\ell + \abs{\beta(x)} \abs{\lambda_2}^\ell \\&=
    2\sqrt{\frac{x(x-cd)}{\Delta(x) \cdot (c-1)}} (c-1)^{\ell/2} (d-1)^{\ell/2}
\end{align*}
We keep in mind that $x$ is fixed, so the expression is smaller than
$(2+\sqrt{d-1})\cdot \ell \cdot(c-1)^{\ell/2} (d-1)^{\ell/2}$ for large enough $\ell$.

We are left with the cases $x = t^2$ for $t = 0, \sqrt{d-1} \pm \sqrt{c-1}$:
\begin{enumerate}
    \item $t = 0$.
    We use the same methods and find that the characteristic polynomial is
    \begin{equation*}
        \lambda^2 + (c-1+d-1)\lambda + (c-1)(d-1)
    \end{equation*}
    whose roots are
    \begin{equation*}
        \lambda_1 = -(c-1) \qquad,\qquad
        \lambda_2 = -(d-1)
        .
    \end{equation*}
    Using the initial conditions ($p_0(0) = c/(c-1)$, $p_1(0) = -c$) we obtain
    \begin{equation*}
        \alpha(0) = \frac{c}{c-1} \qquad,\qquad
        \beta(0) = 0
    \end{equation*}
    and using the fact that $c < d$ we get
    \begin{align*}
        \abs{p_\ell(0)} &=
        \abs{\alpha(0)\lambda_1^\ell + \beta(0)\lambda_2^\ell} \\&=
        \frac{c}{c-1} (c-1)^\ell \\&<
        2 l (c-1)^{\ell/2} (c-1)^{\ell/2} \\&<
        2 l (c-1)^{\ell/2} (d-1)^{\ell/2}
        .
    \end{align*}
    \item $t = \sqrt{d-1} + \sqrt{c-1}$.
    Then $x = t^2 = (\sqrt{d-1} + \sqrt{c-1})^2 = d-1 + c-1 + 2\sqrt{d-1}\sqrt{c-1}$,
    and the characteristic polynomial has a single root of multiplicity $2$, namely
    \begin{equation*}
        \lambda = \frac{x - (c-1) - (d-1)}{2} = \sqrt{d-1}\sqrt{c-1}
        .
    \end{equation*}
    The solution, therefore, takes the form
    \begin{equation*}
        p_n(x) = (\alpha(x) + n \beta(x)) (c-1)^{n/2} (d-1)^{n/2}
        .
    \end{equation*}
    Using the initial values we get
    \begin{equation*}
        \alpha(x) = \frac{c}{c-1} \qquad,\qquad
        \beta(x) = \frac{x-c}{\sqrt{d-1}\sqrt{c-1}} - \frac{c}{c-1} =
        2 + \frac{d-2}{\sqrt{d-1}\sqrt{c-1}} - \frac{c}{c-1}
        .
    \end{equation*}
    $1 < \frac{c}{c-1} \leq 2$ so $\beta(x) \leq \sqrt{d-1} + 1$,
    and in total we get
    \begin{align*}
        \abs{p_\ell(x)} &=
        \abs{\alpha(x) + \ell\beta(x)} (c-1)^{\ell/2} (d-1)^{\ell/2} \\&\leq
        \pare{\left| \frac{1}{\ell} \cdot \frac{c}{c-1} \right| + \abs{\beta(x)}} \ell (c-1)^{\ell/2} (d-1)^{\ell/2} \\&\leq
        \pare{2+\sqrt{d-1}} \ell (c-1)^{\ell/2} (d-1)^{\ell/2}
    \end{align*}
    For sufficiently large $\ell$.
    \item $t = \sqrt{d-1} - \sqrt{c-1}$.
    We get $x = t^2 = d-1 + c-1 - 2\sqrt{d-1}\sqrt{c-1}$, and the rest follows the same calculations as in the previous case.
\end{enumerate}
\end{proof}

Bounds on the spectrum of $A_{2\ell}^{LL}$ give bounds on the number of non-backtracking paths completely contained in a small set, hence gives \cref{upper_bound}.

\begin{proof}[Proof of \cref{upper_bound}]
Recall that $M_{2\ell}(S,G)$ counts the number of non-backtracking paths of length $2\ell$ that start and end in $S$,
so by the definition of the $A_n^{LL}$ operatore, we have
$M_{2\ell}(S,G) = \langle A_{2\ell}^{LL} \mathbbm{1}_S, \mathbbm{1}_S \rangle$.

We note that $A_{2\ell}^{LL} \mathbbm{1}_L = c(c-1)^{\ell-1} (d-1)^\ell \mathbbm{1}_L$,
because every non-backtracking path
starting at a given vertex
is made of picking the first left-to-right edge
(we have $c$ such edges to pick from),
and then alternating between picking any of the $d$ or $c$ edges adjacent to the current vertex,
except for the edge we picked to get to it.

Write $\mathbbm{1}_S = \frac{\abs{S}}{n} \mathbbm{1}_L + r$, with $r \perp \mathbbm{1}_L$,
and $\norm{r}_2^2 \leq \norm{\mathbbm{1}_S}_2^2 = \abs{S}$.
Since the graph is Ramanujan,
the nontrivial eigenvalues in its spectral decomposition have their absolute value in the set
$\{0\} \cup [\sqrt{d-1}-\sqrt{c-1}, \sqrt{d-1}+\sqrt{c-1}]$.
We only care about the nontrivial eigenvalues because $r \perp \mathbbm{1}_L$,
hence in the writing of $r$ in the orthogonal basis made of eigenvectors,
only eigenvectors with nontrivial eigenvalues appear.
We use \cref{poly_bound} to get
\begin{equation*}
    \langle A_{2\ell}^{LL} r, r \rangle \leq
    (2+\sqrt{d-1})\ell (c-1)^{\ell/2} (d-1)^{\ell/2} \cdot \norm{r}_2^2.
\end{equation*}
Combine everything to get
\begin{align*}
    M_{2\ell}(S,G) &=
    \langle A_{2\ell}^{LL} \mathbbm{1}_S, \mathbbm{1}_S \rangle =
    \left\langle A_{2\ell}^{LL} \frac{\abs{S}}{n} \mathbbm{1}_L + r, \frac{\abs{S}}{n} \mathbbm{1}_L + r \right\rangle \\&=
    \frac{\abs{S}^2}{n^2} \langle A_{2\ell}^{LL} \mathbbm{1}_L, \mathbbm{1}_L \rangle +
    \langle A_{2\ell}^{LL} r, r \rangle \\&=
    \frac{\abs{S}^2}{n^2} \cdot c(c-1)^{\ell-1}(d-1)^\ell \langle \mathbbm{1}_L, \mathbbm{1}_L \rangle +
    \langle A_{2\ell}^{LL} r, r \rangle \\&\leq
    \frac{\abs{S}^2}{n} c(c-1)^{\ell-1} (d-1)^\ell + 
    (2+\sqrt{d-1}) \ell(c-1)^{\ell/2}(d-1)^{\ell/2} \norm{r}_2^2 \\&\leq
    \abs{S} \left(\frac{\abs{S} \cdot c \cdot (c-1)^{\ell/2} (d-1)^{\ell/2}}{n (c-1)} + (2+\sqrt{d-1})\ell \right) (c-1)^{\ell/2} (d-1)^{\ell/2} \\&\leq
    \abs{S} \left(\frac{c}{c-1} + (2+\sqrt{d-1})\ell\right) (c-1)^{\ell/2} (d-1)^{\ell/2} \\&\leq
    \abs{S} \pare{(2+\sqrt{d-1})\ell+2} (c-1)^{\ell/2} (d-1)^{\ell/2}
\end{align*}
\end{proof}

\section{Random gadget}\label{gadget_section}
In this section we prove the existence of bipartite graphs such that every small set of left-side vertices
has a unique neighbour on the right side.
We draw a random biregular graph from a similar distribution as in \cite{pippenger1977superconcentrators},
and use techniques similiar to \cite[Thm 4.4]{vadhan2012pseudorandomness}.

\nnote{more convenient for the reader if this moves to the beginning of the section}
\rnote{done}
\begin{lemma}\label{gadget}
For every integers $L, R, c, d$ with $Lc = Rd$, $L>R$, $c>3$, if $k$ is an integer that satisfies the inequality
\begin{equation*}\label{gadget_k_bound}
    k^{\frac{c-3}{2}} \leq 
    \frac{1}{2Le} \cdot \pare{\frac{R}{3ec}}^{\frac{c-1}{2}}
\end{equation*}
then there is a $(c,d)$-biregular graph with sides $[L]$ and $[R]$
such that every set of left vertices of size at most $k$
has a unique neighbour.
\end{lemma}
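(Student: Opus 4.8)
The plan is a probabilistic-method argument: put a natural distribution on $(c,d)$-biregular bipartite graphs with parts $[L]$ and $[R]$, and show that under this distribution the probability that \emph{some} left-set of size $\le k$ fails to have a unique neighbour is strictly less than $1$; any graph avoiding that bad event is the one we want. Concretely, I would sample $G$ from the configuration model: attach $c$ half-edges to every left vertex and $d$ to every right vertex (so both sides carry $cL = dR$ half-edges), take a uniformly random perfect matching between the two collections of half-edges, and read off the resulting multigraph. This is the Pippenger-type distribution, and the counting below follows the technique in Vadhan's existence proof for bipartite expanders.

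The combinatorial core is the usual fact that \emph{expansion forces unique neighbours}: if a left-set $A$ has no unique neighbour then every vertex of $N(A)$ receives at least two of the $c\abs{A}$ edges leaving $A$, so $\abs{N(A)} \le \tfrac{c}{2}\abs{A}$. Hence it suffices to guarantee that every $A$ with $\abs{A}\le k$ satisfies $\abs{N(A)} > \tfrac{c}{2}\abs{A}$. For the union bound I would in fact record a slightly sharper consequence of "$A$ has no unique neighbour'' tuned to the counting — for instance, that there is a set $T \subseteq [R]$ of size about $\tfrac{c-1}{2}\abs{A}$ into which at least $(c-1)\abs{A}$ of the edges out of $A$ fall (take $T$ inside $N(A)$ when $N(A)$ is large, and $T\supseteq N(A)$ when it is small), equivalently that the revealed half-edges of $A$ pile up on few right vertices — since it is this encoding that fixes the shape of the final bound.

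Next comes the union bound. For a fixed $A$ of size $a$ and a fixed candidate right-set $T$, the probability (in the configuration model) that a prescribed collection of $j$ of $A$'s half-edges all land inside $T$'s $\abs{T}d$ half-edges is a ratio of falling factorials, at most $(\abs{T}/R)^{j}$ since $cL = dR$; revealing the half-edges one at a time also lets one exploit that a right vertex already carrying several $A$-edges has few free ports left, which is what makes the estimate sharper than a plain expander-mixing/Vadhan bound. Summing over the choice of those half-edges, over $T$, over $A$ with $\abs{A}=a$ (a factor $\binom{L}{a}$), and finally over $1\le a\le k$, then applying $\binom{n}{j}\le (en/j)^{j}$ everywhere, collapses the whole sum to $\sum_{a=1}^{k}\psi(a)^{a}$ where $\psi(a)$ is an explicit expression of the form $\mathrm{const}_{L,R,c}\cdot a^{(c-3)/2}$, with the constant of order $L\,(ec/R)^{(c-1)/2}$. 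Here $c>3$ is exactly what makes the exponent $(c-3)/2$ positive, so $\psi$ is increasing; the hypothesis on $k$ is precisely the statement $\psi(k)\le\tfrac12$, whence $\psi(a)\le\tfrac12$ for all $a\le k$, the sum is at most $\sum_{a\ge1}2^{-a}<1$, and a graph with the required property exists.

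The step I expect to be the real work is the last one: pushing the chain of binomial estimates through cleanly enough that the leading constant comes out as the stated $3ec$ rather than some larger absolute constant times $c$, handling the floors and ceilings in the witness size $\lceil\tfrac{c-1}{2}a\rceil$ (which are genuinely not negligible at the smallest values of $a$) without spoiling the bound, and checking that $c\ge4$ is the precise threshold at which $\psi(k)\le\tfrac12$ can be met at all. Everything else — the configuration model, the expansion-implies-unique-neighbour observation, and the union-bound bookkeeping — is routine.
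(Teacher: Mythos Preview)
Your plan is correct and matches the paper's proof in all essentials: the configuration model, the implication from ``no unique neighbour'' to $|N(A)|\le\tfrac{c}{2}|A|$, a union bound simplified via $\binom{n}{j}\le(en/j)^j$, and the final geometric sum using $c>3$ to make $\psi$ increasing and the hypothesis on $k$ to make $\psi(k)\le\tfrac12$.

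The one substantive difference is how the bad event is encoded. You witness non-expansion by a target set $T\subseteq[R]$ of size $\approx\tfrac{c-1}{2}a$ absorbing $(c-1)a$ edges, which costs an extra union bound over $T$. The paper instead fixes an ordering $e_1,\dots,e_{ca}$ of the edges out of $A$, calls $e_i$ a \emph{repeat} if its right endpoint was already hit by some earlier $e_j$, and observes that (i) $|N(A)|<\tfrac{c+1}{2}a$ forces at least $\tfrac{c-1}{2}a$ repeats, and (ii) each $e_i$ is a repeat with probability $<ca/R$, and this only drops after conditioning on earlier repeats. That yields the per-$A$ bound $\binom{ca}{(c-1)a/2}\,(ca/R)^{(c-1)a/2}$ with no enumeration of $T$ at all; then $\binom{ca}{(c-1)a/2}\le\bigl(\tfrac{2ce}{c-1}\bigr)^{(c-1)a/2}$ together with $\tfrac{2c}{c-1}\le3$ for $c\ge3$ produces the constant $3ec$ on the nose. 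So the concern you raise in your last paragraph about landing exactly on $3ec$ is precisely what the repeats encoding sidesteps. (The paper, like you, does not dwell on the parity of $(c-1)a$.)
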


We draw a random $(c,d)$-biregular graph in the following way:
fix $L$ vertices on the left side and $R$ vertices on the right side ($cL = dR$),
write $c$ copies of each left-side vertex and $d$ copies of each right-side vertex, and connect them via a uniformly random matching.
That is, pick a uniformly random permutation $\pi: L \times [c] \to R \times [d]$,
and for every $u \in L, v \in R, i \in [c], j \in [d]$,
if $\pi(u,i) = (v,j)$,
then add $(u,v)$ as an edge.
Note that we allow multiple edges between two vertices (if there are several $i,j$ satisfying $\pi(u,i) = (v,j)$).

Let $G$ be a random bipartite graph with $L$ vertices on the left side and $R$ vertices on the right side drawn from said distribution.
Let $A$ be a subset of left vertices of size $k$.
We note that if $A$ expands by at least $(c+1)/2$,
then, by a simple counting argument, $A$ has a unique neighbour.
It is therefore sufficient to find the probability that $A$ expands by at least $(c+1)/2$.

Let us fix an arbitary ordering of the $ck$ edges leaving $A$, and denote it $e_1, \ldots, e_{ck}$.
We say that $e_i$ is a \textit{repeat} if it touches a previously covered vertex,
that is,
if its right endpoint is contained in the set of right endpoints of the set
$e_1, \ldots, e_{i-1}$.
We note that if $A$ does \emph{not} expand by at least $(c+1)/2$, then,
again by a simple counting argument,
there are at least $(c-1)k/2$ repeats.
This is because the number of repeats and the size of the set of the neighbours of $A$
add up to the number of edges leaving $A$, namely $ck$.

We note that for every $i$, $e_i$ is a repeat if it touches one of $i-1$ or less previously covered vertices.
This means that $\prob{\text{$e_i$ is a repeat}} \leq \frac{i-1}{R} < \frac{ck}{R}$.
Moreover, if we condition on the event that some of the first $i-1$ edges are also repeats, then the probability that $e_i$ is a repeat may only decrease, since it means that there are less \say{forbidden} endpoints.
We conclude that for every set of $l$ edges:

\begin{align*}
    \prob{\text{$e_{i_1}, \ldots, e_{i_l}$ are repeats}} &=
    \prod_{j=1}^l \prob{\text{$e_{i_j}$ is a repeat} \given \text{$e_{i_1}, \ldots e_{i_{j-1}}$ are repeats}} <
    \pare{\frac{ck}{R}}^l
    .
\end{align*}

If $A$ expands too little, then there are many repeats.
We can use it to bound the probability that $A$ has no unqiue neighbour:
\begin{align*}
    \prob{\text{$A$ has no unique neighbour}} &\leq
    \prob{\text{$A$ expands by $<(c+1)/2$}} \\&\leq
    \prob{\text{there are at least $(c-1)k/2$ \textit{repeats}}} \\&\leq
    \sum_{{i_1}, \ldots, {i_{(c-1)k/2}} \in \binom{ck}{(c-1)k/2}}
    \prob{\text{$\{e_{i_1}, \ldots, e_{i_{(c-1)k/2}}\}$ are repeats}}
    \\&<
    \binom{ck}{\frac{c-1}{2}k} \cdot
    \pare{\frac{ck}{R}}^{\frac{c-1}{2}k}
\end{align*}

And by a union bound over the possible choices of $A$:
\begin{align}\label{prob_bad_set}
\begin{split}
    \prob{\exists\: \text{\say{bad} $A$ of size $k$}} &\leq
    \binom{L}{k} \cdot \prob{\text{$A$ expands by $<(c+1)/2$}} \\&\leq
    \binom{L}{k} \cdot
    \binom{ck}{\frac{c-1}{2}k} \cdot
    \pare{\frac{ck}{R}}^{\frac{c-1}{2}k} \\&\leq
    \pare{\frac{L e}{k}}^k \cdot
    \pare{\frac{cke}{\frac{c-1}{2}k}}^{\frac{c-1}{2}k} \cdot
    \pare{\frac{ck}{R}}^{\frac{c-1}{2}k} \\&=
    \pare{ \frac{Le}{k} \cdot \pare{\frac{2ce}{c-1} \cdot \frac{ck}{R}}^{\frac{c-1}{2}} }^k \\&\leq
    \pare{ \frac{Le}{k} \cdot \pare{\frac{3eck}{R}}^{\frac{c-1}{2}} }^k 
\end{split}
\end{align}

Where the last inequality follows from assuming that $c \geq 3$ so $\frac{2c}{c-1} \leq 3$.

We are now ready to prove \cref{gadget}.
\begin{proof}[Proof of \cref{gadget}]
Let us draw a $(c,d)$-biregular graph $G = ([L] \sqcup [R], E)$ from the distribution described above.
Let $k$ be an integer satsifying \eqref{gadget_k_bound}.
Using a union bound and the inequality in \eqref{prob_bad_set}, we have
(where probability is taken over the choice of $G$):
\begin{align*}
    \prob{\exists\: \text{\say{bad} $A \subseteq [L]$ of size $\leq k$}} &=
    \sum_{a=1}^k \prob{\exists\: \text{\say{bad} $A \subseteq [L]$ of size $a$}} \\&\leq
    \sum_{a=1}^k   \pare{ \frac{Le}{a} \cdot \pare{\frac{3eca}{R}}^{\frac{c-1}{2}} }^a  \\&<
    \sum_{a=1}^\infty  \pare{ \frac{Le}{k} \cdot \pare{\frac{3eck}{R}}^{\frac{c-1}{2}} }^a \\&=
    \sum_{a=1}^\infty  \pare{ k^{\frac{c-1}{3}} \cdot Le \cdot \pare{\frac{3ec}{R}}^{\frac{c-1}{2}} }^a 
    \\&\leq
    \sum_{a=1}^\infty  \pare{\frac{1}{2}}^a <
    1
    .
\end{align*}
We see that with strictly positive probability,
a random graph has no \say{bad} subsets of size $\leq k$,
hence there exists a graph with the desired unique neighbour property.
\end{proof}
\nnote{Excellent}
\section{Construction}\label{construction}
\subsection{Routed product definition}\label{rpd}
Let us begin with a brief coding theory motivation. An error-correcting code is often given via an $m\times n$ parity check matrix $H$, so that $C = \operatorname{Ker} H\subseteq \{0,1\}^n$. The matrix $H$ can be visualized as a bipartite graph, called the {\em parity check graph}, with $n$ left and $m$ right vertices, and an edge $i\sim j$ whenever $H(j,i)\neq 0$. 
A Tanner code is defined given a bipartite graph $B$ and a base code $C_0 = \operatorname{Ker} H_0$ \cite{tanner1981recursive}. One way to view the routed product is through the point of view of codes. Consider the parity check graph $B_0$ of $H_0$ and define the \emph{routed product} of $B$ and $B_0$ to be simply the parity check graph of the Tanner code ${\cal C}(B,C_0)$. 

Here is a more detailed and combinatorial definition of the routed product without mention of codes. 
Let $G = (L \sqcup R, E)$ be a $(c,d)$-biregular graph
and $G_0 = (L_0 \sqcup R_0, E_0)$ a $(c_0, d_0)$-biregular graph.
\nnote{add: The composed graph is modeled after Tanner codes \cite{tanner1981recursive}. The constraint graph of the Tanner code is the result of a ``routed product'' of a big graph $G$ with a graph $G_0$ that is the constraint graph of a base code $C_0$. }
\rnote{added at the top of this subsection. It's also in the related work section}
We think of $G$ as a big graph (in practice, an infinite family of Ramanujan graphs),
and $G_0$ as a fixed size graph (gadget).
Assume that $\abs{L_0} = d$,
and let us think of the edges of $G$ as a function $E: R \times [d] \to L$ which maps a right side vertex $v$ and an index $i$ to the $i^\text{th}$ neighbour of $v$ in $G$.

We can define the \emph{routed product} graph $G' = G \routeprod G_0$
as the bipartite graph whose left side is $L$, right side is the cartesian product $R \times R_0$, and the set of edges is
\begin{equation*}
    E' = \{\pare{E(v,i), (v, j)}:\: v \in R, i \in [d], j \in [R_0], (i,j) \in E_0\}
    .
\end{equation*}
That is, we write $R_0$ copies of each vertex in $R$,
and every right side vertex $v$ in the big graph $G$
and an edge $(i,j)$ in the small gadget $G_0$
gives an edge between the $i^\text{th}$ neighbour of $v$ in $G$,
and the $j^\text{th}$ vertex of the copy of $G_0$ assigned to $v$ in $G'$.
Otherwise put, we use $G_0$ to route every edge of the big graph $G$
to $c_0$ edges in the product graph $G'$.

More precisely,
for every $v \in R$,
the bipartite subgraph of $G'$ whose left side is $N_{G}(v)$ and right side is $(v, \cdot)$ is isomorphic to $G_0$.
This means that, roughly speaking, unique neighbours are inherited from the small graph to the product graph:
\begin{restatable}{lemma}{unqclaim}\label{unq_claim}
Let $S \subseteq L$, $v \in N_{G}(S)$.
Define $S' = \{i: E(v, i) \in S\} \subseteq [d]$ as the indexed neighbours of $v$ in $S$.
If $S'$, as a set of vertices in the gadget $G_0$, has a unique neighbour $j \in R_0$ in $G_0$,
then $(v,j)$ is a unique neighbour of $S$ in the product graph $G'$.
\end{restatable}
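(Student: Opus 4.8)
The plan is to unwind the combinatorial definition of $\routeprod$, identify exactly which left vertices are adjacent to the candidate unique neighbour $(v,j)$ in $G'$, and then intersect that neighbourhood with $S$ using the definition of $S'$.

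First I would pin down the $G'$-neighbourhood of $(v,j)$. From the description $E' = \{(E(v',i),(v',j')) : v'\in R,\ i\in[d],\ j'\in[R_0],\ (i,j')\in E_0\}$, any edge of $G'$ incident to $(v,j)$ must have $(v',j')=(v,j)$, hence $v'=v$ and $j'=j$. So the edges at $(v,j)$ are precisely $\{(E(v,i),(v,j)) : i\in[d],\ (i,j)\in E_0\}$; in particular they all live inside the single copy of $G_0$ attached to $v$, and no other copy $v''\neq v$ can contribute (this is exactly the isomorphism statement preceding the lemma, which one may alternatively just cite). Rewriting $(i,j)\in E_0$ as $i\in N_{G_0}(j)$, the set of left vertices adjacent to $(v,j)$ in $G'$ is $\{E(v,i) : i\in N_{G_0}(j)\}$.

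Next I would intersect with $S$. By the very definition $S' = \{i\in[d] : E(v,i)\in S\}$, so $E(v,i)\in S \iff i\in S'$; therefore the set of vertices of $S$ adjacent to $(v,j)$ equals $\{E(v,i) : i\in N_{G_0}(j)\cap S'\}$. The hypothesis that $j$ is a unique neighbour of $S'$ in $G_0$ is exactly the statement $\abs{N_{G_0}(j)\cap S'}=1$; let $i^\star$ be its unique element. Then the set above is the image of the singleton $\{i^\star\}$ under $i\mapsto E(v,i)$, i.e.\ the single vertex $E(v,i^\star)$, and $E(v,i^\star)\in S$ since $i^\star\in S'$. Hence $(v,j)$ is adjacent to exactly one vertex of $S$, namely $E(v,i^\star)$, which is the definition of a unique neighbour of $S$ in $G'$.

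There is no genuine obstacle; the lemma is a direct consequence of the definition. The only points needing mild care are (i) confirming that every edge incident to $(v,j)$ is confined to the $v$-th copy of $G_0$, so a ``foreign'' copy cannot smuggle in a second $S$-neighbour, and (ii) the possibility of parallel edges: if $G$ or $G_0$ has multi-edges then $(v,j)$ may reach $E(v,i^\star)$ along several edges, but this is harmless because the unique-neighbour notion in use counts distinct adjacent \emph{vertices}, and the computation above already shows that set is a singleton.
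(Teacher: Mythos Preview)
Your proposal is correct and follows essentially the same route as the paper's proof: both arguments read off from the definition of $E'$ that the $G'$-neighbours of $(v,j)$ are exactly the vertices $E(v,i)$ with $(i,j)\in E_0$, then use $E(v,i)\in S\iff i\in S'$ together with the unique-neighbour hypothesis on $j$ to conclude. Your version is slightly more explicit about confining edges to the $v$-copy and about the multi-edge caveat, but the underlying argument is identical.
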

The proof is immediate while staring at \cref{fig:my_label},
but for the sake of completion it is given in \nameref{app}. \nnote{the appendix; or Appendix A}
\rnote{done}

\begin{figure}
    \centering
    \includegraphics[height=0.65\paperheight,keepaspectratio]{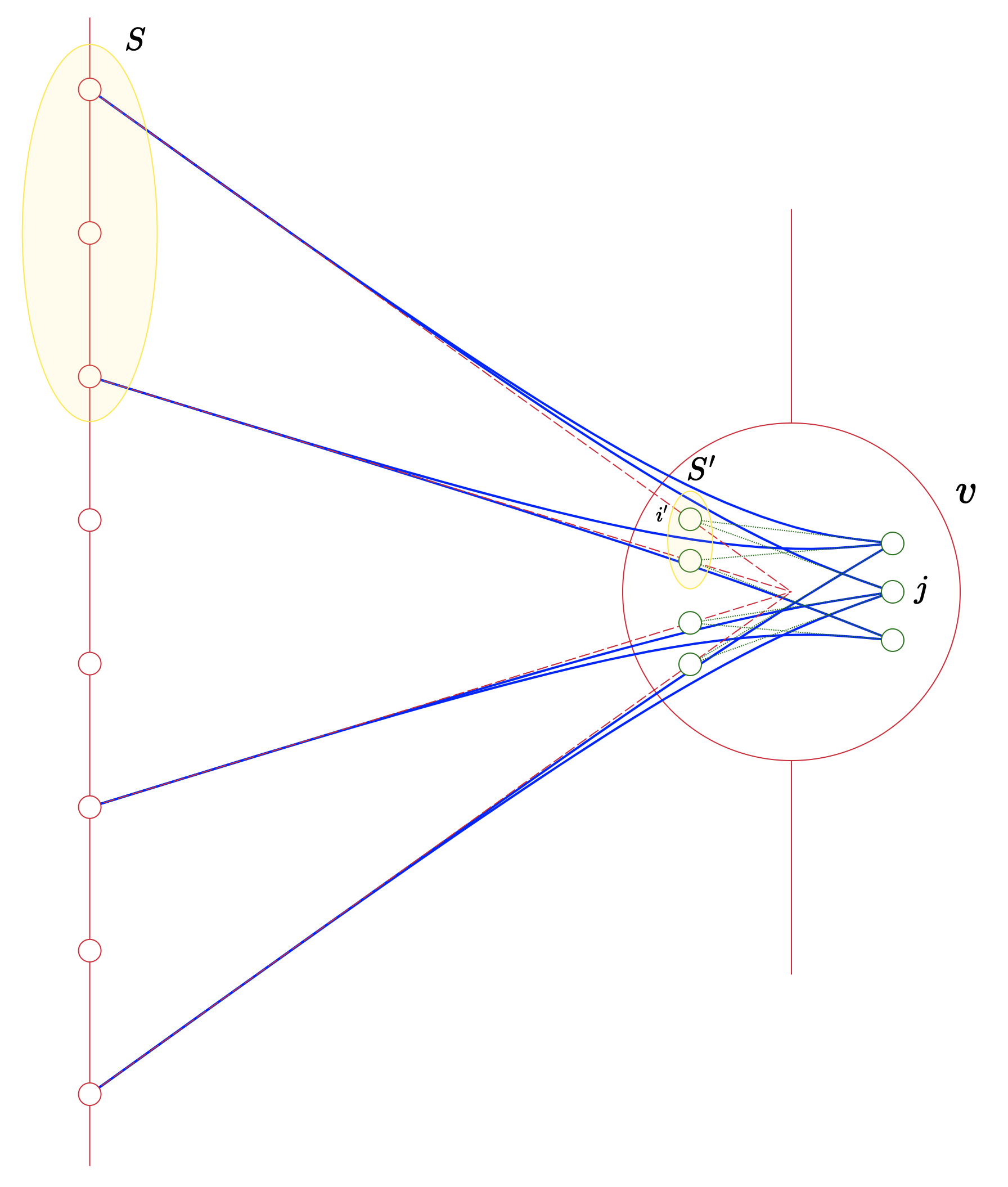}
    \caption{An example of a bipartite graph $G$ (dashed, red),
    a small gadget $G_0$ (dotted, green),
    and the routed product $G' = G \routeprod G_0$ (solid, blue).
    The set $S \subseteq L$ has a neighbour $v \in R$,
    and so $S$ is associated with a set $S'$ of left side vertices of the copy of $G_0$ associated with $v$.
    Since $(i', j)$ is the only edge connecting $j$ to $S'$ in $G_0$,
    we have that $(v,j)$ is a unique neighbour of $S$ in $G'$.
    }
    \label{fig:my_label}
\end{figure}

\subsection{Proof of Theorem \ref{une}}\label{thmproof}
Let $q$ be a prime power, $c_0$ an integer, and $\alpha > 1$.
Assume that $\alpha c_0 (q+1)$ is an integer.
We construct an infinite family of $(c_0(q+1), \alpha c_0 (q+1))$-biregular
graphs with the unique neighbour property
under some assumptions specified below.

Denote $c = q+1$ and $d = q^3+1$.
By \cref{ballentine} \nnote{refer to precise statement of theorem from prelims}\rnote{done}
there is an efficient construction of an infinite family of $(c,d)$-biregular Ramanujan graphs $(G_n)$.
Let $G_0 = (L_0 \sqcup R_0, E_0)$ be a gadget: a $c_0$-left-regular bipartite graph with $\abs{L_0} = d = q^3+1$ vertices on the left side and $R_0$ vertices on the right side, such that every left-side set of sufficiently small size admits a unique neighbour on the right side, where \say{sufficiently small} here means the bound given in \cref{gadget}.
For the constructed graph to have the left side $\alpha$ times bigger than the right side,
we set $R_0 = \frac{d}{\alpha c} = \frac{q^3+1}{\alpha(q+1)}$.

We define $G'_n = G_n \routeprod G_0$ as the routed product of $G_n$ and $G_0$. For the rest of this (short) proof let us suppress $n$ from the notation, for convenience.

Let $\eps < \frac 1 q$.
By \cref{thm:main-tech}, there exists $\delta > 0$
such that for every $S \subseteq L$ of size at most $\delta \abs{L}$,
the \say{average right degree} $\bar d_S$,
namely the average of the degrees of vertices in $N_G(S)$ in the induced subgraph $S \sqcup N_G(S)$,
is bounded:
\begin{equation*}
    \bar d_S :=
    \frac{c \abs{S}}{\abs{N_G(S)}}
    \leq
    1 + (1+\eps)\sqrt{\frac{d-1}{c-1}}
    .
\end{equation*}
We show that such $S$ has a unique neighbour in $G'$.

We note that $\frac{d-1}{c-1} = q^2$,
so since $\eps<\frac 1 q$ we have
a vertex $v \in R$ of \say{degree} at most $q+1$ in $G$,
that is, 
the set $S' \subseteq [d]$ of $v$'s neighbours in $S$ is of size at most $q+1$.
By \cref{unq_claim}, if $S'$, as a set of left-side vertices in $G_0$,
has a unique neighbour $j$ \nnote{check} in $G_0$,
then our original set $S$ has a unique neighbour $(v,j)$  \nnote{check} in $G$.
\rnote{correct}

It remains to choose the parameters in a way that all left-side sets of size at most $q+1$
have a unique neighbour in $G_0$.
By \cref{gadget}, we need to have:
\begin{equation}\label{ineq_q}
    (q+1)^{\frac{c_0-3}{2}} \leq
    \frac{1}{2(q^3+1) e} \cdot
    \pare{\frac{\frac{q^3+1}{\alpha(q+1)}}{3 e c_0}}^{\frac{c_0-1}{2}}
    .
\end{equation}
The LHS is $O(q^{\frac{c_0-3}{2}})$ and RHS is $\Theta(q^{c_0-4})$,
so if $c_0 > 5$ then for sufficiently large $q$ the construction gives a unique neighbour expander.
That is, there exists some $\hat q(c_0, \alpha)$ such that if $q > \hat q$
then \eqref{ineq_q} holds, hence we constructed a bipartite unique neighbour expander
as promised in \cref{une}.

\section{Future work}
\nnote{distinguish between theoretically efficient, which it is; and between "practical" which it isn't. Also add a couple of words about random but verifiable}
\rnote{done}
The main pitfall of our approach is the non-constructive nature of the gadget. Theoretically since the gadget has constant size this is no issue. However, exhaustive search is impractical even for small values of $q$. 
This is because the gadget's size is cubic in $q$ so the search space is of size exponential in $q^3$.
A natural question would be whether it is possible to construct such a gadget in an efficient way,
since that would lead to the whole unique neighbour expander family to be constructible in practice.
For the bipartite Ramanujan family chosen in our work (the one by Ballantine et al. \cite{ballantine2015explicit})
we ask the following.
\begin{question}
For which prime power $q$ and real number $\alpha \geq 1$
can one construct efficiently a biregular graph
with left side $q^3+1$,
right side $\frac{q^3+1}{\alpha(q+1)}$,
such that every left side set of size at most $q+1$ has a unique neighbour?
\end{question}

We note that the fixed size graph given in \cite[Lemma 4.3]{alon2002explicit}
is a good gadget 
(for $\alpha = 22/21$ and the edge-vertex incidence graphs of a $44$-regular Ramanujan graph family),
and indeed these graphs can be used to construct bipartite unique neighbour expanders.

Since we prove that a random gadget is, with non-negligble probability,
good for our construction, it may be interesting to construct
such gadget by simply drawing random gadgets and testing whether they are good.
Since drawing is simple, we are left with the task of testing. We therefore ask:
\begin{question}
    Given a bipartite graph,
    can one efficiently find the smallest nonempty
    set of left-side vertices that has no unique neighbours?
\end{question}
We currently know of no better way than just enumerating all left-side sets,
which is exponential in the size of the graph, hence impractical.
We refer to \cite{applebaum2019sampling} for an interesting approach to testing expansion of random graphs.

The methods presented in this work are not limited to the $(q+1,q^3+1)$-biregular Ramanujan family.
We can therefore ask the question the other way around -- find a gadget (by sampling or any other way),
and see whether we can efficiently construct a bipartite Ramanujan family that will
make it work, i.e. that would allow us to rewrite the proof of \cref{une}.
This emphasizes the well-known natural question of constructing Ramanujan graphs with arbitrary degrees, specifically in the bipartite and biregular setting,
\begin{question}
For which integers $c < d$ can one construct efficiently
an infinite family of $(c,d)$-biregular Ramanujan graphs?
\end{question}
We note that our construction is far from \say{right-side unique neighbour expansion,}
as the complete right side of a single gadget
is a constant-size set with no unique neighbours on the left.
We wonder whether it is possible to construct a bipartite graph where \emph{all}
small size sets (be them contained in either sides, or both) have unique neighbours.

\newpage
\printbibliography

@misc{kamber2019,
      title={$L^p$ Expander Graphs}, 
      author={Amitay Kamber},
      year={2019},
      eprint={1609.04433},
      archivePrefix={arXiv},
      primaryClass={math.CO}
}

@article{godsil1988walk,
  title={Walk generating functions and spectral measures of infinite graphs},
  author={Godsil, Chris D and Mohar, Bojan},
  journal={Linear Algebra and its Applications},
  volume={107},
  pages={191--206},
  year={1988},
  publisher={Elsevier}
}

@article{li1996,
title = {Spectra of Regular Graphs and Hypergraphs and Orthogonal Polynomials},
journal = {European Journal of Combinatorics},
volume = {17},
number = {5},
pages = {461-477},
year = {1996},
% issn = {0195-6698},
doi = {https://doi.org/10.1006/eujc.1996.0040},
% url = {https://www.sciencedirect.com/science/article/pii/S0195669896900402},
author = {Wen-Ch'ing Winnie Li and Patrick Sol\'{e}},
abstract = {In this paper we study the distribution of eigenvalues of regular graphs, regular hypergraphs, and biregular bipartite graphs of given girth by considering the polynomials orthogonal with respect to the measures attached to the spectra of such graphs and to the continuous spectra of their ‘universal covers’. Our estimates are tight for Biggs graphs and generalized polygons. We also give an application to the distribution of eigenvalues of Hecke operators acting on weight 2 cusp forms for certain congruence subgroups.}
}

@article{brito2022spectral,
  title={Spectral gap in random bipartite biregular graphs and applications},
  author={Brito, Gerandy and Dumitriu, Ioana and Harris, Kameron Decker},
  journal={Combinatorics, Probability and Computing},
  volume={31},
  number={2},
  pages={229--267},
  year={2022},
  publisher={Cambridge University Press}
}

@article{gribinski2021existence,
  title={Existence and polynomial time construction of biregular, bipartite Ramanujan graphs of all degrees},
  author={Gribinski, Aurelien and Marcus, Adam W},
  journal={arXiv preprint arXiv:2108.02534},
  year={2021}
}

@incollection{ballantine2015explicit,
  title={Explicit construction of Ramanujan bigraphs},
  author={Ballantine, Cristina and Feigon, Brooke and Ganapathy, Radhika and Kool, Janne and Maurischat, Kathrin and Wooding, Amy},
  booktitle={Women in numbers europe},
  pages={1--16},
  year={2015},
  publisher={Springer}
}

@article{kahale1995eigenvalues,
  title={Eigenvalues and expansion of regular graphs},
  author={Kahale, Nabil},
  journal={Journal of the ACM (JACM)},
  volume={42},
  number={5},
  pages={1091--1106},
  year={1995},
  publisher={ACM New York, NY, USA}
}

@article{haemers1995interlacing,
  title={Interlacing eigenvalues and graphs},
  author={Haemers, Willem H},
  journal={Linear Algebra and its applications},
  volume={226},
  pages={593--616},
  year={1995},
  publisher={Elsevier}
}

@inproceedings{marcus2013interlacing,
  title={Interlacing families I: Bipartite Ramanujan graphs of all degrees},
  author={Marcus, Adam and Spielman, Daniel A and Srivastava, Nikhil},
  booktitle={2013 IEEE 54th Annual Symposium on Foundations of computer science},
  pages={529--537},
  year={2013},
  organization={IEEE}
}

@inproceedings{alon2002explicit,
  title={Explicit unique-neighbor expanders},
  author={Alon, Noga and Capalbo, Michael},
  booktitle={The 43rd Annual IEEE Symposium on Foundations of Computer Science, 2002. Proceedings.},
  pages={73--79},
  year={2002},
  organization={IEEE}
}

@inproceedings{capalbo2002randomness,
  title={Randomness conductors and constant-degree expansion beyond the degree/2 barrier},
  author={Capalbo, Michael and Reingold, Omer and Vadhan, Salil and Wigderson, Avi},
  booktitle={Proceedings of the 34th Annual ACM Symposium on Theory of Computing},
  pages={659--668},
  year={2002}
}

@article{peleg1989constructing,
  title={Constructing disjoint paths on expander graphs},
  author={Peleg, David and Upfal, Eli},
  journal={Combinatorica},
  volume={9},
  number={3},
  pages={289--313},
  year={1989},
  publisher={Springer}
}

@article{sipser1996expander,
  title={Expander codes},
  author={Sipser, Michael and Spielman, Daniel A},
  journal={IEEE transactions on Information Theory},
  volume={42},
  number={6},
  pages={1710--1722},
  year={1996},
  publisher={IEEE}
}

@article{ben2009tensor,
  title={Tensor products of weakly smooth codes are robust},
  author={Ben-Sasson, Eli and Viderman, Michael},
  journal={Theory of Computing},
  volume={5},
  number={1},
  pages={239--255},
  year={2009},
  publisher={Theory of Computing Exchange}
}

@article{vadhan2012pseudorandomness,
  title={Pseudorandomness},
  author={Vadhan, Salil P and others},
  journal={Foundations and Trends in Theoretical Computer Science},
  volume={7},
  number={1--3},
  year={2012},
  publisher={Now Publishers, Inc.}
}

@article{feng1996spectra,
  title={Spectra of hypergraphs and applications},
  author={Feng, Keqin and Wen-Ch'ing Winnie Li},
  journal={Journal of number theory},
  volume={60},
  number={1},
  pages={1--22},
  year={1996},
  publisher={Elsevier}
}

@article{nilli1991second,
  title={On the second eigenvalue of a graph},
  author={Nilli, Alon},
  journal={Discrete Mathematics},
  volume={91},
  number={2},
  pages={207--210},
  year={1991},
  publisher={Elsevier}
}

@inproceedings{kamber2022combinatorics,
  title={Combinatorics via closed orbits: number theoretic Ramanujan graphs are not unique neighbor expanders},
  author={Kamber, Amitay and Kaufman, Tali},
  booktitle={Proceedings of the 54th Annual ACM SIGACT Symposium on Theory of Computing},
  pages={426--435},
  year={2022}
}

@inproceedings{applebaum2019sampling,
  title={Sampling graphs without forbidden subgraphs and unbalanced expanders with negligible error},
  author={Applebaum, Benny and Kachlon, Eliran},
  booktitle={2019 IEEE 60th Annual Symposium on Foundations of Computer Science (FOCS)},
  pages={171--179},
  year={2019},
  organization={IEEE}
}

@article{arora1996line,
  title={On-line algorithms for path selection in a nonblocking network},
  author={Arora, Sanjeev and Leighton, Frank Thomson and Maggs, Bruce M},
  journal={SIAM Journal on Computing},
  volume={25},
  number={3},
  pages={600--625},
  year={1996},
  publisher={SIAM}
}

@article{becker,
    title = {Symmetric unique neighbor expanders and good LDPC codes},
    journal = {Discrete Applied Mathematics},
    volume = {211},
    pages = {211-216},
    year = {2016},
    issn = {0166-218X},
    doi = {https://doi.org/10.1016/j.dam.2016.04.022},
    url = {https://www.sciencedirect.com/science/article/pii/S0166218X16301810},
    author = {Oren Becker},
    keywords = {Unique neighbor expander, Error correcting code, Cayley graph},
    abstract = {An infinite family of bounded-degree ‘unique-neighbor’ expanders was constructed explicitly by Alon and Capalbo (2002). We present an infinite family F of bounded-degree unique-neighbor expanders with the additional property that every graph in the family F is a Cayley graph. This answers a question raised by Tali Kaufman. Using the same methods, we show that the symmetric LDPC codes constructed by Kaufman and Lubotzky (2012) are in fact symmetric under a simply transitive group action on coordinates.}
}

@inproceedings{pippenger1993self,
  title={Self-routing superconcentrators},
  author={Pippenger, Nicholas},
  booktitle={Proceedings of the twenty-fifth annual ACM symposium on Theory of Computing},
  pages={355--361},
  year={1993}
}

@article{pippenger1977superconcentrators,
  title={Superconcentrators},
  author={Pippenger, Nicholas},
  journal={SIAM Journal on Computing},
  volume={6},
  number={2},
  pages={298--304},
  year={1977},
  publisher={SIAM}
}

@article{peleg1989token,
  title={The token distribution problem},
  author={Peleg, David and Upfal, Eli},
  journal={SIAM journal on computing},
  volume={18},
  number={2},
  pages={229--243},
  year={1989},
  publisher={SIAM}
}

@article{tanner1981recursive,
  title={A recursive approach to low complexity codes},
  author={Tanner, R},
  journal={IEEE Transactions on information theory},
  volume={27},
  number={5},
  pages={533--547},
  year={1981},
  publisher={IEEE}
}

@article{lubotzky1988ramanujan,
  title={Ramanujan graphs},
  author={Lubotzky, Alexander and Phillips, Ralph and Sarnak, Peter},
  journal={Combinatorica},
  volume={8},
  number={3},
  pages={261--277},
  year={1988}
}

@article{margulis1988explicit,
  title={Explicit group-theoretical constructions of combinatorial schemes and their application to the design of expanders and concentrators},
  author={Margulis, G. A.},
  journal={Problemy peredachi informatsii},
  volume={24},
  number={1},
  pages={51--60},
  year={1988},
  publisher={Russian Academy of Sciences, Branch of Informatics, Computer Equipment and~…}
}

@incollection{dinur2006robust,
  title={Robust local testability of tensor products of LDPC codes},
  author={Dinur, Irit and Sudan, Madhu and Wigderson, Avi},
  booktitle={Approximation, Randomization, and Combinatorial Optimization. Algorithms and Techniques},
  pages={304--315},
  year={2006},
  publisher={Springer}
}
\newpage
\section{Appendix A}\label{app}
We restate and prove the lemmas we used throughout the work.
\vve*
\begin{proof}
    Let $G = (V,E)$ a $d$-regular Ramanujan graph.
    The adjacency matrix of $G'$ is $A = \begin{bmatrix} 0 & M \\ M^\top & 0 \end{bmatrix}$ where $M$
    has $\abs{E}$ rows, each containing two 1's,
    and $\abs{V}$ columns, each containing $d$ 1's.
    Let $v$ be an eigenvector of $A$ with eigenvalue $\lambda$; then $v$
    is an eigenvector of $A^2$ with eigenvalue $\lambda^2$.
    We note that
    \begin{equation*}
        A^2 = \begin{bmatrix} M M^\top & 0 \\ 0 & M^\top M \end{bmatrix}
    \end{equation*}
    so it suffices to consider the spectrum of $M^\top M$, which is essentially the operator corresponding to a walk from a vertex of $G$ to an edge that touches it and back to one of its endpoints (possibly the same vertex we started at).
    
    For every $v \in V$, there are $d$ ways to walk from it to an edge and then back to $v$;
    all other legal paths correspond to picking an edge touching $v$.
    We conclude that $M^\top M = dI + A$, so every eigenvalue $\lambda$ of $G'$ satisfies
    $\lambda^2 = d + \sigma$ where $\sigma$ is an eigenvalue of $G$.

    The lemma is proven by noting that $\abs{\sigma} \leq 2\sqrt{d-1}$ (since $G$ is Ramanujan),
    so
    \begin{equation*}
        d-2\sqrt{d-1} \leq \lambda^2 \leq d+2\sqrt{d-1}
    \end{equation*}
    The terms on the extreme sides of the inequality can be verified to be
    $(\sqrt{d-1} \pm 1)^2$
    so we get $\abs{\lambda} \in [\sqrt{d-1}-1, \sqrt{d-1}+1]$, as needed (recall that in $G'$ the left-regularity is $c=2$ so $\sqrt{c-1}=1$).
\end{proof}
\linreclemma*
\begin{proof}
We note that for every $n \geq 2$ we have
\begin{equation*}
    \begin{bmatrix}
    x_n \\ x_{n-1}
    \end{bmatrix}
    =
    \begin{bmatrix}
    Ax_{n-1} + Bx_{n-2} \\ x_{n-1}
    \end{bmatrix}
    =
    \begin{bmatrix}
    A & B \\
    1 & 0
    \end{bmatrix}
    \begin{bmatrix}
    x_{n-1} \\ x_{n-2}
    \end{bmatrix}
\end{equation*}
Denote the $2 \times 2$ matrix by $D$, 
so by induction,
\begin{equation*}
    \begin{bmatrix}
    x_n \\ x_{n-1}
    \end{bmatrix}
    =
    D ^ n
    \begin{bmatrix}
    x_1 \\ x_0
    \end{bmatrix}
\end{equation*}
Let us diagonalize $D$.
The characteristic polyonmial is
\begin{align*}
p_D(\lambda) =
\operatorname{det} (\lambda I - D) =
\begin{vmatrix}
\lambda - A & -B \\
-1 & \lambda
\end{vmatrix} =
\lambda(\lambda-A) - B =
\lambda^2 -A \lambda - B
\end{align*}
If $p_D(\lambda)$ has two distinct roots $\lambda_1, \lambda_2$,
then the matrix is diagonalizable;
that means that there exists a $2 \times 2$ matrix $M$ such that $D = M \cdot \operatorname{diag} \{\lambda_1, \lambda_2\} \cdot M^{-1}$.
We get:
\begin{equation*}
    \begin{bmatrix}
    x_n \\ x_{n-1}
    \end{bmatrix}
    =
    M
    \begin{bmatrix}
    \lambda_1 & 0 \\
    0 & \lambda_2
    \end{bmatrix} ^ n
    M^{-1}
    \begin{bmatrix}
    x_1 \\ x_0
    \end{bmatrix}
    =
    M
    \begin{bmatrix}
    \lambda_1^n & 0 \\
    0 & \lambda_2^n
    \end{bmatrix}
    M^{-1}
    \begin{bmatrix}
    x_1 \\ x_0
    \end{bmatrix}
\end{equation*}
We can compute $M, M^{-1}$ explicitly, multiple the matrices and get $\alpha, \beta \in \mathbb{C}$ such that
$x_n = \alpha \lambda_1^n + \beta \lambda_2^n$ as required.

Otherwise, if $p_D(\lambda)$ has a single root $\lambda$ of multiplicity $2$,
then we can find its Jordan form, i.e. find $M$ such that
\begin{align*}
    D &= M
    \begin{bmatrix}
    \lambda & 1 \\
    0 & \lambda
    \end{bmatrix}
    M^{-1} \\
    D^n &= M
    \begin{bmatrix}
    \lambda & 1 \\
    0 & \lambda
    \end{bmatrix}
    ^n
    M^{-1}
    =
    M
    \begin{bmatrix}
    \lambda^n & n \lambda^{n-1} \\
    0 & \lambda^n
    \end{bmatrix}
    M^{-1}
\end{align*}
Where the last equality follows from a simple induction.

Similarly, we get
\begin{equation*}
    \begin{bmatrix}
    x_n \\ x_{n-1}
    \end{bmatrix}
    =
    M
    \begin{bmatrix}
    \lambda & 1 \\
    0 & \lambda
    \end{bmatrix} ^ n
    M^{-1}
    \begin{bmatrix}
    x_1 \\ x_0
    \end{bmatrix}
    =
    M
    \begin{bmatrix}
    \lambda^n & n \lambda^{n-1} \\
    0 & \lambda^n
    \end{bmatrix}
    M^{-1}
    \begin{bmatrix}
    x_1 \\ x_0
    \end{bmatrix}
\end{equation*}
And again we can find $\alpha, \beta \in \mathbb{C}$ as required.
\end{proof}

For the following lemma we remind that $G = (L \sqcup R, E)$ is a $(c,d)$-biregular graph,
$G_0 = (L_0 \sqcup R_0, E_0)$ is a $(c_0, d_0)$-biregular graph,
and $G' = G \routeprod G_0$ is the routed product of $G$ and $G_0$.
Recall that the edges of $G'$ are $(E(v,i), (v,j))$
when $v \in R$ is a right side vertex of $G$, $i \in [d]$, $E(v,i)$ is the $i^\text{th}$ neighbour
of $v$ according to $G$, and $(i,j) \in E_0$.
\unqclaim*
\begin{proof}
Assume that $i' \in S'$ is the unique neighbour of $j$ in $G_0$.
By the definition of the routed product we have that $(E(v,i'), (v,j))$ is an edge in $G$.
Since $i' \in S'$ we have that $E(v,i') \in S$,
so indeed $(v,j)$ is a neighbour of $S$ in $G'$.
It is therefore remaining to show that it is unique, i.e. that
$E(v,i')$ is the only neighbour of $(v,j)$ in $S$.

The neighbours of $(v,j)$ in $G$ are $E(v,i)$ for every $i$ such that $(i,j) \in E_0$.
If $E(v,i) \in S$, then by the definition of $S'$ we have that $i \in S'$,
so $i$ is a neighbour of $j$ in $E_0$.
But we know that $j$ is a unique neighbour of $S'$ in $E_0$,
so we must have that $i = i'$,
and indeed $(v,j)$ is a unique neighbour of $S$ in $G'$.
\end{proof}

\end{document}